\newcommand{\wn}{\includegraphics{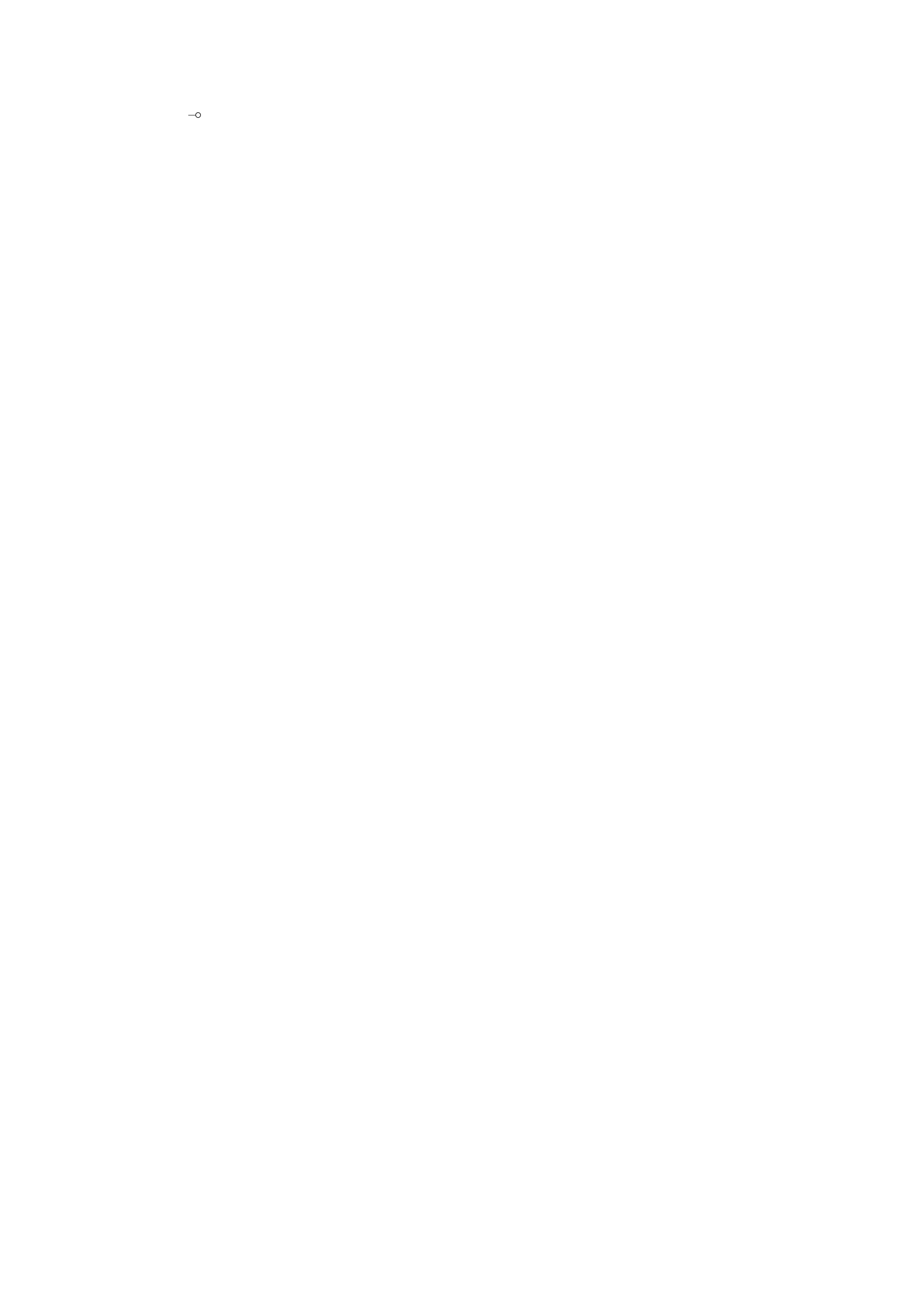}}
\newtheorem{thm}{Theorem}
\newtheorem{lmm}{Lemma}
\newtheorem{pro}{Proposition}
\theoremstyle{remark}\newtheorem{remark}{Remark}
\theoremstyle{definition}\newtheorem{definition}{Definition}
\begin{document}

\renewcommand{\headrulewidth}{0pt}

\title{Vertex Degrees in Planar Maps}
\author{Gwendal Collet, Michael Drmota, Lukas Daniel Klausner}
\address{TU Wien, Institute of Discrete Mathematics and Geometry, Wiedner Hauptstrasse 8--10, A-1040 Wien, Austria}
\keywords{planar maps, central limit theorem, analytic combinatorics, mobiles}

\maketitle

\lhead{Proceedings of the 27th International Conference on Probabilistic, Combinatorial and Asymptotic Methods for the Analysis of Algorithms}

\thispagestyle{fancy}
\lhead{\textit{Proceedings of the 27th International Conference on Probabilistic, Combinatorial and Asymptotic Methods for the Analysis of Algorithms\\
Krak\'ow, Poland, 4--8 July 2016}} 

\let\thefootnote\relax\footnotetext{\textit{Partially supported by the Austrian Science Fund FWF, Project SFB F50-02}}

\begin{abstract}
We prove a general multi-dimensional central limit theorem for the expected number of vertices of a given degree in the family of planar maps whose vertex degrees are restricted to an arbitrary (finite or infinite) set of positive integers $D$. Our results rely on a classical bijection with mobiles (objects exhibiting a tree structure), combined with refined analytic tools to deal with the systems of equations on infinite variables that arise. We also discuss some possible extension to maps of higher genus.
\end{abstract}

\section{Introduction and Results}

In this paper we study statistical properties of planar maps, which
are connected planar graphs, possibly with loops and multiple edges, together with
an embedding into the plane. Such objects are frequently used to describe 
topological features of geometric arrangements in two or three spatial dimensions.
Thus, the knowledge of the structure and of properties of ``typical'' objects
may turn out to be very useful in the analysis of particular algorithms that operate on planar maps.
We say that map is {\it rooted} if an edge $e$ is distinguished and oriented.
It is called the root edge. 
The first vertex $v$ of this oriented edge is called the root-vertex. The face to the right of $e$ is called the
root-face and is usually taken as the outer (or infinite)
face. Similarly, we call a planar map {\it pointed} if just a vertex $v$ is distinguished.
However, we have to be really careful with the model. In rooted maps the root edge
{\it destroys} potential symmetries, which is not the case if we consider pointed maps.
%In order to simplify the model a little we will therefore assume that all half-edges
%are labelled, that is, if the map has $m$ edges then we distributed the labels
%$1,2,\ldots, 2m$ to the corresponding half-edges. In the rooted case this does not
%make any difference since every (unlabeled) map occurs now with multiplicity $(2m)$.
%But for pointed maps it really matters -- when there are symmetries. Since the
%counting problem is slightly easier when we do not have to care about 
%symmetries we will always assume that (rooted or pointed) maps are 
%half-edge labeled.

The enumeration of rooted maps is a classical subject, initiated by Tutte in
the 1960's, see \cite{Tutte}. Among many other results, Tutte computed the number
$M_n$ of rooted maps with $n$ edges, proving the formula
\[
M_n = \frac{2(2n)!}{(n+2)! n!} 3^n 
\]
which directly provides the asymptotic formula
\[
M_n \sim \frac 2{\sqrt \pi} n^{-5/2} 12^n.
\]
We are mainly interested in planar maps with degree restrictions.
Actually, it turns out that this kind of asymptotic expansion is
quite universal. Furthermore, there is always a (very general) 
central limit theorem for the number of vertices of given degree.

\begin{thm}\label{Th1}
Suppose that $D$ is an arbitrary set of positive integers but not a
subset of $\{1,2\}$, let $\mathcal{M}_D$ be the class of planar rooted maps with the property
that all vertex degrees are in $D$ and let $M_{D,n}$ denote the number of
maps in $\mathcal{M}_D$ with $n$ edges. 
Furthermore, if $D$ contains only even numbers, then set $d = {\rm gcd}\{i: 2i \in D\}$;
set $d=1$ otherwise.

Then there exist positive constants
$c_D$ and $\rho_D$ with
\begin{equation}\label{eqTh11}
M_{D,n} \sim c_D n^{-5/2} \rho_D^{-n}, \qquad n \equiv 0 \bmod d.
\end{equation}
Furthermore, let $X_n^{(d)}$ denote the random variable counting vertices of degree $d$ ($\in D$)
in maps in $\mathcal{M}_D$. Then $\mathbb{E}(X_n^{(d)}) \sim \mu_d n$ for some constant
$\mu_d > 0$ and for $n\equiv 0 \bmod d$,  and the (possibly infinite) random vector ${\bf X}_n = (X_n^{(d)})_{d\in D}$
($n \equiv 0 \bmod d$) satisfies a central limit theorem, that is, 
\begin{equation}\label{eqTh12}
\frac 1{\sqrt n} \left( {\bf X}_n - \mathbb{E}({\bf X}_n) \right),  \qquad n \equiv 0 \bmod d, 
\end{equation}
converges weakly to a centered Gaussian random variable ${\bf Z}$ (in $\ell^2$).
\end{thm}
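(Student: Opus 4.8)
The plan is to proceed by a bijective reduction to tree-like structures, followed by singularity analysis and a perturbation argument for the limit law. First I would apply the Bouttier--Di Francesco--Guitter mobile bijection to identify $\mathcal{M}_D$, via the associated pointed maps, with a family of mobiles in which the degree restriction $D$ is transcribed into a restriction on the admissible degrees of the mobile's white vertices. Marking edges by a variable $z$ and vertices of each degree $d \in D$ by a variable $u_d$, this yields a multivariate generating function $M(z,\mathbf{u})$ governed by a system of functional equations $\mathbf{y} = \mathbf{F}(z,\mathbf{u},\mathbf{y})$ of positive type, where $\mathbf{y}$ collects the generating functions of the rooted mobiles. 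When $D$ is infinite this system involves infinitely many equations, which is the source of the technical difficulties flagged in the abstract.

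Next, at $\mathbf{u}=\mathbf{1}$ I would carry out the singularity analysis. The relevant component of $\mathbf{y}$ acquires a square-root singularity at a dominant singularity $z=\rho_D$, and the composition producing $M(z,\mathbf{1})$ upgrades this to a singular expansion of the form $M(z,\mathbf{1}) = a - b\,(1 - z/\rho_D)^{3/2} + \dotsb$; the exponent $3/2$ is exactly what the transfer theorem converts into the factor $n^{-5/2}$ in \eqref{eqTh11}, giving $M_{D,n} \sim c_D\,n^{-5/2}\rho_D^{-n}$. The periodicity $n \equiv 0 \bmod d$ reflects the support of the coefficient sequence: when all degrees in $D$ are even the only non-vanishing coefficients occur at multiples of $d = \gcd\{i : 2i \in D\}$, so the asymptotics are stated along this subsequence, while the aperiodic case $d=1$ is unconstrained.

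For the limit law I would treat $\mathbf{u}$ as a perturbation parameter near $\mathbf{1}$. The key step is to show that the dominant singularity $\rho_D(\mathbf{u})$ and the associated constant depend analytically on each $u_d$, uniformly on suitable compact neighbourhoods, so that
\begin{equation*}
\mathbb{E}\Bigl(\textstyle\prod_{d} u_d^{X_n^{(d)}}\Bigr) = \frac{[z^n]\,M(z,\mathbf{u})}{[z^n]\,M(z,\mathbf{1})} = C(\mathbf{u})\left(\frac{\rho_D(\mathbf{1})}{\rho_D(\mathbf{u})}\right)^{n}\bigl(1+o(1)\bigr)
\end{equation*}
holds uniformly. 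From this representation the mean $\mathbb{E}(X_n^{(d)}) \sim \mu_d n$ with $\mu_d > 0$ follows by differentiating $-\log\rho_D(\mathbf{u})$ in the direction $u_d$ at $\mathbf{u}=\mathbf{1}$, and the multivariate Quasi-Powers Theorem then yields a Gaussian limit for every finite sub-family of the $X_n^{(d)}$, with covariances read off from the Hessian of $-\log\rho_D$ at $\mathbf{1}$.

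The hardest part will be upgrading this finite-dimensional central limit theorem to weak convergence in $\ell^2$. Convergence of all finite-dimensional distributions is supplied by the previous step, so it remains to establish tightness, which amounts to showing that the limiting covariance operator is trace-class, i.e.\ that $\sum_{d \in D} \mathbb{V}(X_n^{(d)})/n$ stays bounded uniformly in $n$ and that its tail over large $d$ can be made uniformly small. This requires genuinely infinite-dimensional control of the system $\mathbf{y}=\mathbf{F}(z,\mathbf{u},\mathbf{y})$: I would quantify how the singular behaviour degrades as the marking of high-degree vertices is switched on and exploit the rapid decay of the coefficients associated with high degrees to bound the tail of the variance sum. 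Establishing this uniform decay, and with it the analyticity and tightness estimates in the infinite-variable regime, is where the refined analytic tools alluded to in the abstract are indispensable.
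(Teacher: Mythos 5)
Your plan follows essentially the same route as the paper: mobile bijection, positive system of functional equations, square-root singularity upgraded to a $(1-z/\rho)^{3/2}$ term for the map series, quasi-powers for the finite-dimensional limit laws, and tightness via decay of the high-degree contributions as the genuinely hard step in the infinite case. One concrete misstep in your setup, though: the degree restriction is \emph{not} transcribed onto the white vertices of the mobile. In the Bouttier--Di Francesco--Guitter/CFKS bijection the white vertices do correspond to the (non-pointed) vertices of the map, but their degrees in the mobile do not equal the degrees of those vertices; it is the \emph{black} vertices (corresponding to faces) whose degrees equal the face valencies. As written, restricting white-vertex degrees would produce the wrong family. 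The fix is the one the paper uses: pass to the dual map, so that the constraint $D$ becomes a constraint on face valencies, hence on black-vertex degrees, which the functional equations control directly through the variables $x_i$. With that correction the rest of your outline matches the paper's argument, including the verification of tightness: the paper checks the moment/derivative conditions of the infinite-dimensional central limit theorem of Drmota--Gittenberger--Morgenbesser by observing that the singular value of the tree series satisfies $R_D(\rho)<1/4$ while the combinatorial weights grow like $4^i$, which is exactly the ``rapid decay of the coefficients associated with high degrees'' you invoke. Two further technicalities the paper handles that you gloss over: in the non-bipartite case one has a strongly connected system of several equations (with the key analytic issue being that the singular point stays inside the domain of convergence), and the tightness criteria are stated for the solution $R_D$ of the functional equation rather than for $M_D$ itself, so one must argue that the integration step $\partial M_D/\partial t = 2(R_D/z - t)$ preserves the location of the singularity and hence the conditions.
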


Note that maps where all vertex degrees are $1$ or $2$ are very easy to 
characterize and are not really of interest, and that actually, their asymptotic
properties are different from the general case. It is therefore natural to
assume that $D$ is not a subset of $\{1,2\}$.

Since we can equivalently consider dual maps, this kind of problem is the
same as considering planar maps with restrictions on the face valencies.
This means that the same results hold if we replace {\it vertex degree} by
{\it face valency}. 
For example, if we assume that all face valencies equal 4, then we just consider
planar quadrangulations (which have also been studied by Tutte \cite{Tutte}).
In fact, our proofs will refer just to face valencies.

Theorem~\ref{Th1} goes far beyond known results.
There are some general results for the Eulerian case where all vertex 
degrees are even. First, the asymptotic expansion (\ref{eqTh11}) is known
for Eulerian maps by 
Bender and Canfield \cite{BC}. Furthermore, a central limit theorem of the
form (\ref{eqTh12}) is known for all Eulerian maps (without degree restrictions)
\cite{DGM}. However, in the non-Eulerian case there are almost no results of
this kind; there is only a one-dimensional central limit theorem for $X_n^{(d)}$
for all planar maps \cite{DP}.

\bigskip

Section~\ref{sec2} introduces planar mobiles which, being in bijection with pointed planar maps, will reduce our analysis to simpler objects with a tree structure. Their asymptotic behaviour is derived in Section~\ref{sec3}, first for the simpler case of bipartite maps (i.e., when $D$ contains only even integers), then for families of maps without constraints on $D$. Section~\ref{sec4} is devoted to the proof of the central limit theorem using analytic tools from~\cite{Drmotabook,DGM}. Finally, in Section~\ref{sec5} we discuss the combinatorics of maps on orientable surface of higher genus. The expressions we obtain are much more involved than in the planar case, but it is expected to lead to similar analytic results.

\section{Mobiles}\label{sec2}

Instead of investigating planar maps themselves, we will follow the principle presented in \cite{CFKS}, whereby pointed planar maps are bijectively related to a certain class of trees called mobiles. (Their version of mobiles differ from the definition originally given in \cite{BDFG}; the equivalence of the two definitions is not shown explicitly in \cite{CFKS}, but \cite{CF} gives a straightforward proof.)

\begin{definition}\label{def:mobiles}
A \emph{mobile} is a planar tree -- that is, a map with a single face -- such that there are two kinds of vertices (black and white), edges only occur as black--black edges or black--white edges, and black vertices additionally have so-called ``legs'' attached to them (which are not considered edges), whose number equals the number of white neighbor vertices. \\
A \emph{bipartite mobile} is  a mobile without black--black edges. \\
The \emph{degree} of a black vertex is the number of half-edges plus the number of legs that are 
attached to it. \\
A mobile is called \emph{rooted} if an edge is distinguished and oriented.
\end{definition}

The essential observation is that mobiles are in bijection to pointed planar maps.

\begin{thm}\label{Th2}
There is a bijection between mobiles that contain at least one black vertex and pointed planar maps, where white vertices in the mobile correspond to non-pointed vertices in the equivalent planar map, black vertices correspond to faces of the map, and the degrees of the black vertices 
correspond to the face valencies. This bijection induces a bijection on the edge sets so that
the number of edges is the same. (Only the pointed vertex of the map has no counterpart.)

Similarly, rooted mobiles that contain at least one black vertex are in bijection to rooted and vertex-pointed planar maps.

Finally, bipartite mobiles with at least two vertices correspond to bipartite maps with at least two vertices,
in the unrooted as well as in the rooted case.
\end{thm}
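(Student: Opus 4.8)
The plan is to realize the claimed correspondence as a specialization of the Bouttier--Di Francesco--Guitter labeling bijection in the form presented in \cite{CFKS}, and then to check that it respects the finer statistics (edges, degrees) and restricts correctly to the rooted and bipartite settings. For the map-to-mobile direction, I would start from a pointed planar map $(M,v_0)$ and label every vertex $v$ by its graph distance $\ell(v)=d(v,v_0)$ to the pointed vertex. Connectedness of $M$ forces the labels of the two endpoints of any edge to differ by at most one, so reading the corners around any face in the orientation induced by the embedding produces a label sequence that changes by $0$ or $\pm1$ at each step. I would then place one black vertex inside each face and, traversing the corners of that face, attach to the black vertex either a black--white edge to the incident corner or a leg, according to the local ascent/descent of the labels (the rule of \cite{CFKS}); black--black edges are created exactly at the edges of $M$ whose two endpoints carry equal labels. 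One checks that the union of these local pieces is a connected, acyclic planar object with the required leg condition -- that is, a mobile -- and that it carries a black vertex as soon as $M$ has a face, which always holds.

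Next I would describe the inverse map and argue that the two constructions are mutually inverse, which I expect to be the main obstacle. Given a mobile, the labels are recovered up to a global shift by prescribing label increments along the contour of the tree, and the map is reconstructed by the usual \emph{closure} operation: each corner is joined to the next corner of smaller label in contour order, the corners carrying the minimal label are joined to a single added vertex $v_0$, and finally the black vertices and legs are deleted, with $v_0$ becoming the pointed vertex. The delicate points -- that this closure introduces no crossings, yields a connected planar map, recovers the distance labeling $\ell(v)=d(v,v_0)$, and exactly undoes the forward construction -- are precisely those established in \cite{CFKS} (with the equivalence to the mobiles of \cite{BDFG} supplied by \cite{CF}); I would invoke these rather than reprove them.

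It then remains to read off the stated dictionary. White vertices are, by construction, the vertices of $M$ other than $v_0$, and black vertices are in bijection with the faces of $M$; since a black vertex collects one half-edge or leg per corner of its face, its degree equals the face valency, giving the degree-to-valency correspondence. The forward construction and its inverse manifestly match edges of the mobile with edges of $M$ (each black--white or black--black edge is glued back to exactly one edge of the map), so the edge sets correspond and the number of edges is preserved; the pointed vertex $v_0$ is the only feature of $(M,v_0)$ with no counterpart. For the rooted statement I would transport the root: an oriented root edge of the mobile singles out a corner, whose image under the closure is an oriented edge of $M$, while the pointing of $M$ is supplied by the added vertex $v_0$; conversely the oriented root edge of a rooted, vertex-pointed map determines a distinguished oriented edge of the mobile, and its pointing drives the labeling. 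Because rooting rigidifies the objects (it kills the automorphisms noted for maps in the introduction), this passage is a clean bijection with no symmetry factors to account for.

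Finally, for the bipartite case I would specialize $M$ to a bipartite map, where all cycles are even and hence the distance labeling has the property that adjacent vertices always differ in label by exactly one. No edge of $M$ then joins equal-label endpoints, so the construction produces no black--black edges and the mobile is bipartite; conversely a bipartite mobile yields, under the closure, a bipartite map. The ``at least two vertices'' hypotheses on both sides match the requirement that $M$ carry at least one edge, equivalently that the mobile have a black vertex together with a white neighbour, and the rooted bipartite correspondence follows by combining this specialization with the root-transport of the previous paragraph.
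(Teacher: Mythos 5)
Your proposal is correct and follows essentially the same route as the paper: the paper's proof simply cites \cite{CF} (and implicitly \cite{CFKS}) for the mobile--pointed-map bijection including the bipartite case, and observes that the induced edge bijection transfers the root edge with its orientation. You spell out the labeling/closure mechanics in more detail, but you defer exactly the same delicate verifications to the same references, so there is no substantive difference.
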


\begin{proof}
For the proof of the bijection between mobiles and pointed maps we refer to \cite{CF}, where the
bipartite case is also discussed. It just remains to note that the induced bijection on the edges
can be directly used to transfer the root edge together with its direction.
\end{proof}

\subsection{Bipartite Mobile Counting}

We start with bipartite mobiles since they are more easy to {\it count}, 
in particular if we consider rooted bipartite mobiles, see \cite{CF}.

\begin{pro}\label{Pro1}
Let $R = R(t,z,x_1,x_2,\ldots)$ be the solution of the equation
\begin{equation}\label{eqPro11}
R = tz + z\sum_{i\ge 1} x_{2i} {2i-1 \choose i} R^i.
\end{equation}
Then the generating function $M= M(t,z,x_1,x_2,\ldots)$ of bipartite rooted maps satisfies
\begin{equation}\label{eqPro12}
\frac{\partial M}{\partial t} = 2\left( R/z - t\right),
\end{equation}
where the variable $t$ corresponds to the
number of vertices, $z$ to the number of edges, and $x_{2i}$, $i\ge 1$, to the
number of faces of valency $2i$. 
\end{pro}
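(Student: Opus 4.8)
The plan is to read both sides of \eqref{eqPro12} combinatorially through the mobile bijection of Theorem~\ref{Th2} and to match them. The left-hand side $\partial M/\partial t$ is the generating function of rooted bipartite maps in which one further vertex has been distinguished: since $t$ marks all vertices, differentiating in $t$ selects one vertex to be pointed while the remaining vertices stay marked by $t$. By the rooted version of Theorem~\ref{Th2}, rooted vertex-pointed bipartite maps are in bijection with rooted bipartite mobiles, and under this bijection the pointed vertex disappears, the non-pointed vertices become the white vertices (still counted by $t$), edges are preserved (counted by $z$), and a face of valency $2i$ becomes a black vertex of degree $2i$ (counted by $x_{2i}$). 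So the first step is to show that $\partial M/\partial t$ equals the generating function of rooted bipartite mobiles in the variables $t, z, x_2, x_4, \dots$.

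Next I would recover \eqref{eqPro11} as the defining equation of the relevant planted generating function. Following the recursive decomposition of bipartite mobiles in \cite{CF,BDFG}, $R$ counts planted mobiles, and the equation is obtained by inspecting the black vertex incident to the planting edge: in a bipartite mobile such a vertex of degree $2i$ carries $i$ half-edges leading to sub-mobiles and $i$ legs, these being the only possibilities. Reading the slots other than the planting slot in cyclic order, the number of planar arrangements of the remaining half-edges and legs around a degree-$2i$ black vertex is $\binom{2i-1}{i}$, the attached sub-mobiles contribute the factor $R^i$, and together with the edge weight $z$, the face weight $x_{2i}$, and the degenerate base case $tz$ one obtains exactly $R = tz + z\sum_{i\ge 1} x_{2i}\binom{2i-1}{i}R^i$.

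With $R$ so described, the generating function of rooted bipartite mobiles is obtained by rooting at an oriented edge. Every edge joins a white and a black vertex, so decomposing at the \emph{black} endpoint of the root edge—of degree $2i$, with its $i$ incident sub-mobiles accounted for by $R^i$ and its legs arranged in $\binom{2i-1}{i}$ planar ways—shows that the contribution for a fixed orientation of the root edge is $\sum_{i\ge 1} x_{2i}\binom{2i-1}{i}R^i$. The two orientations of the root edge are genuinely distinct and, decomposing at the same black endpoint, contribute equally, producing the global factor $2$. Since \eqref{eqPro11} gives $R/z - t = \sum_{i\ge 1} x_{2i}\binom{2i-1}{i}R^i$ upon dividing by $z$, this yields $\partial M/\partial t = 2(R/z - t)$, as claimed.

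The main obstacle I anticipate is the careful bookkeeping of symmetries and rooting conventions rather than any single hard estimate. One must verify that differentiation in $t$ corresponds exactly to pointing one vertex with the correct multiplicity, that the bijection of Theorem~\ref{Th2} transports the root edge and its orientation faithfully so that no spurious factor appears, and that the planar cyclic arrangements around each black vertex are counted by $\binom{2i-1}{i}$ without over- or undercounting. I would confirm all constant factors—in particular the global $2$—against the smallest cases, for instance checking that the single-edge map (one face of valency $2$) contributes the term $x_2 t^2 z$ to $M$, consistent with $\partial M/\partial t = 2(R/z-t) = 2x_2 tz + \cdots$.
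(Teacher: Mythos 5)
Your proposal is correct and follows essentially the same route as the paper: identify $\partial M/\partial t$ with rooted, vertex-pointed bipartite maps and hence, via Theorem~\ref{Th2}, with rooted bipartite mobiles; derive \eqref{eqPro11} from the recursive structure of planted mobiles; and obtain the factor $2$ from the two possible orientations of the root edge, which the paper phrases as the equality of the white-rooted series $R/z-t$ and the black-rooted series $\sum_{i\ge1} x_{2i}\binom{2i-1}{i}R^i$. One bookkeeping slip in your second paragraph: if the planting edge is incident to a black vertex of degree $2i$, only $i-1$ of its remaining $2i-1$ slots lead to sub-mobiles, so the direct reading is the sequence form $R = tz\big/\big(1 - z\sum_{i\ge1} x_{2i}\binom{2i-1}{i}R^{i-1}\big)$ (algebraically equivalent to the additive form you state), whereas the combination $\binom{2i-1}{i}R^i$ with no extra factor $z$ is the correct count only for the black-rooted series you use in your third paragraph.
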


\begin{proof}
%Let $R = R(t,z,x_1,x_2,\ldots)$ be the generating function of rooted bipartite mobiles,
%where the root vertex is white and where we additionally attach a {\it planted edge} 
%to the (white) root vertex next to the root edge (for example, in counterclockwise order). 
%The variable $t$ corresponds to the
%number of white vertices, $z$ to the number of edges, and $x_{2i}$, $i\ge 1$, to the
%number of black vertices of degree $2i$. 

Since rooted mobiles can be considered as ordered rooted trees
(which means that the neighboring vertices of the root vertex 
are linearly ordered and the subtrees rooted at these neighboring vertices
are again ordered trees) we can describe them recursively.
This directly leads to a functional equation for $R$ of the form
\[
R = \frac{tz}{1- z \sum_{i\ge 1} x_{2i} {2i-1 \choose i} R^{i-1} }
\]
which is apparently the same as (\ref{eqPro11}). Note that the factor ${2i-1 \choose i}$ is
precisely the number of ways of grouping $i$ legs and $i-1$ edges around a black vertex
(of degree $2i$; one edge is already there).

Hence, the generating function of rooted mobiles that are rooted by a white vertex is 
given by $R/z$. Since we have to discount the mobile that consists just of one (white)
vertex, the generating function of rooted mobiles that are
rooted at a white vertex and contain at least two vertices is given by
\begin{equation}\label{eqwb}
R/z - t = \sum_{i\ge 1} x_{2i} {2i-1 \choose i} R^i.
\end{equation}
We now observe that the right hand side of (\ref{eqwb}) is precisely the
generating function of rooted mobiles that are rooted at a black vertex
(and contain at least two vertices). Summing up, the generating function of
bipartite rooted mobiles (with at least two vertices) is given by
\[
2 (R/z -t).
\]
Finally, if $M$ denotes the generating function of bipartite rooted maps 
(with at least two vertices) then 
$\frac{\partial M}{\partial t}$ corresponds to rooted maps, where a non-root vertex is
pointed (and discounted). Thus, by Theorem~\ref{Th2} we obtain (\ref{eqPro12}).
\end{proof}

\begin{remark} It can be easily checked that Formula~\eqref{eqPro12} can be specialized to count $M_D$, for any subset $D$ of even positive integers: It suffices to set to $0$ every $x_{2i}$ such that $2i\in D$.
\end{remark}

\subsection{General Mobile Counting}

We now proceed to develop a mechanism for general mobile counting that is
adapted from \cite{CFKS}. For this, we will require Motzkin paths.

\begin{definition}
A \emph{Motzkin path} is a path starting at $0$ and going rightwards for a number of steps; the steps are either diagonally upwards ($+1$), straight ($0$) or diagonally downwards ($-1$). A \emph{Motzkin bridge} is a Motzkin path from $0$ to $0$. A \emph{Motzkin excursion} is a Motzkin bridge which stays non-negative.
\end{definition}

We define generating functions in the variables $t$ and $u$, which count the number of steps of type $0$ and $-1$, respectively. (Explicitly counting steps of type $1$ is then unnecessary, of course.) The ordinary generating functions of Motzkin bridges, Motzkin excursions, and Motzkin paths from $0$ to $+1$ shall be denoted by $B(t,u)$, $E(t,u)$ and $B^{(+1)}(t,u)$, respectively.

Continuing to follow the presentation of \cite{CFKS} and decomposing these three types of paths by their last passage through $0$, we arrive at the equations:
\begin{align*}
E &= 1 + tE + uE^2, \\
B &= 1 + (t + 2uE)B, \\
B^{(+1)} &= EB.
\end{align*}
In what follows we will also make use of bridges where the first step is either of type $0$ or $-1$.
Clearly, their generating function $\overline B$ is given by
\[
\overline B = t B + u B^{(+1)} = B(t+uE).
\]

When Motzkin bridges are not constrained to stay non-negative, they can be seen as am arbitrary arrangement of a given number of steps $+1,0,-1$. It is then possible to obtain explicit expressions for
\begin{align}
B_{\ell,m} &= [t^\ell u^m] B(t,u) = \binom{l+2m}{l,m,m},  \label{eqB1} \\
B_{\ell,m}^{(+1)} &= [t^\ell u^m] B^{(+1)}(t,u) = \binom{l+2m+1}{l,m,m+1}, \label{eqB2} \\
\overline B_{\ell,m} &= [t^\ell u^m] \overline B(t,u) =  B_{\ell-1,m} + B_{\ell,m-1}^{(+1)} = \frac{l+m}{l+2m}\binom{l+2m}{l,m,m}.  \label{eqB3}
\end{align}

\begin{comment}By using Lagrange inversion it is possible to obtain explicit expressions for
\begin{align}
B_{\ell,m} &= [t^\ell u^m] B(t,u) = \sum_{i=0}^\ell \sum_{j=0}^m 2^j \frac {i+j}{\ell+2m-j} {i+j \choose i} 
{\ell+2m-j \choose \ell-i, m-j, m+i },  \label{eqB1} \\
B_{\ell,m}^{(+1)} &= [t^\ell u^m] B^{(+1)}(t,u) = \sum_{i=0}^\ell \sum_{j=0}^m 2^j \frac {i+j+1}{\ell+2m-j+1} {i+j \choose i} 
{\ell+2m-j+1 \choose \ell-i, m-j, m+i+1 }, \label{eqB2} \\
\overline B_{\ell,m} &= [t^\ell u^m] \overline B(t,u) =  B_{\ell-1,m} + B_{\ell,m-1}^{(+1)},  \label{eqB3}
\end{align}
\end{comment}

%%GC%% I believe the obtained expressions are wrong (see case l=2, m=1)

Using the above, we can now finally compute relations for generating functions of proper classes of mobiles. We define the following series, where $t$ corresponds to the
number of white vertices, $z$ to the number of edges, and $x_i$, $i\ge 1$, to the
number of black vertices of degree $i$:
\begin{itemize}
\item $L(t,z,x_1,x_2,\ldots)$ is the series counting rooted mobiles that are rooted at a black vertex and where an additional edge is attached to the black vertex. 
\item $Q(t,z,x_1,x_2,\ldots)$ is the series counting rooted mobiles that are rooted at a univalent white vertex, 
which is not counted in the series. 
\item $R(t,z,x_1,x_2,\ldots)$ is the series counting rooted mobiles that are rooted at a white vertes and  where an additional edge is attached to the root vertex. 
\end{itemize}

Similarly to the above we obtain the following equations for the generating functions of mobiles and rooted maps.
\begin{pro}\label{Pro2}
Let $L= L(t,z,x_1,x_2,\ldots)$, $Q=Q(t,z,x_1,x_2,\ldots)$, and 
$R = R(t,z,x_1,x_2,\ldots)$ be the solutions of the equation
\begin{align}
L &= z \sum_{\ell,m} x_{2m+\ell+1} B_{\ell,m} L^\ell R^m,  \nonumber \\
Q &= z \sum_{\ell,m} x_{\ell+2m+2} B_{\ell,m}^{(+1)} L^\ell R^m,  \label{eqPro21} \\
R &= \frac{t z}{1 - Q},  \nonumber
\end{align}
and let $T = T(t,z,x_1,x_2,\ldots)$ be given by
\begin{equation}\label{eqPro21bis}
T = 1 + \sum_{\ell,m} x_{2m+\ell} \overline B_{\ell,m} L^\ell R^m,
\end{equation}
where the numbers $B_{\ell,m}$, $B_{\ell,m}^{(+1)}$, and $\overline B_{\ell,m}$ are given by (\ref{eqB1})--(\ref{eqB3}).
Then the generating function $M= M(t,z,x_1,x_2,\ldots)$ of rooted maps satisfies
\begin{equation}\label{eqPro22}
\frac{\partial M}{\partial t} = R/z-t + T,
\end{equation}
where the variable $t$ corresponds to the
number of vertices, $z$ to the number of edges, and $x_i$, $i\ge 1$, to the
number of faces of valency $i$. 
\end{pro}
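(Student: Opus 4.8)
The plan is to mirror the proof of Proposition~\ref{Pro1}, but now handling general (non-bipartite) mobiles, where black--black edges are present and the combinatorial bookkeeping around each black vertex is governed by Motzkin paths. The key structural idea is that when we traverse the neighbours of a black vertex of degree $k$ in cyclic order, each neighbour is either a white vertex (contributing a subtree counted by $R$, since an edge is attached there), another black vertex (contributing a subtree counted by $L$), or a leg. The Motzkin-path encoding records these choices: a step of type $0$ corresponds to a white neighbour (hence the variable paired with $L$ in the exponent $\ell$), a step of type $-1$ to a black neighbour (hence paired with $R$ in the exponent $m$), and the remaining steps of type $+1$ to legs. The constraint that the number of legs equals the number of white neighbours is exactly the balance condition turning an arbitrary arrangement of steps into a \emph{bridge} (equal numbers of $+1$ and $-1$ steps), which is why the coefficients $B_{\ell,m}$, $B^{(+1)}_{\ell,m}$, $\overline B_{\ell,m}$ from \eqref{eqB1}--\eqref{eqB3} appear.

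First I would justify each equation in \eqref{eqPro21} by a decomposition-at-the-root argument. For $L$, the root is a black vertex with an extra attached edge, so reading around it gives a bridge whose $\ell$ horizontal steps mark the $\ell$ white subtrees (each weighted by $R$) and $m$ down-steps mark the $m$ black subtrees (each weighted by $L$); the degree of this black vertex is then $2m+\ell+1$ (the $+1$ from the distinguished attached edge), explaining the index $x_{2m+\ell+1}$ and the weight $z$ for that edge, together with the bridge count $B_{\ell,m}$. The series $Q$ is analogous but the root white vertex is univalent and uncounted, which shifts the enumeration to bridges ending at $+1$, hence the coefficients $B^{(+1)}_{\ell,m}$ and the degree index $\ell+2m+2$. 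The equation $R = tz/(1-Q)$ then follows because a white-rooted mobile (with an extra edge) attaches an arbitrary sequence of $Q$-type structures to its root, the factor $t$ counting the root white vertex and $z$ the extra edge; expanding the geometric series reproduces the self-consistent recursive description. Finally, $T$ in \eqref{eqPro21bis} counts mobiles rooted at a black vertex \emph{without} an extra edge, so the relevant paths are the bridges $\overline B$ whose first step is of type $0$ or $-1$ (i.e.\ the root corner is a genuine corner, not the attached edge), with degree index $2m+\ell$ and the additive constant $1$ accounting for the trivial mobile.

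Having established the mobile generating functions, I would then obtain \eqref{eqPro22} exactly as in the bipartite case: by Theorem~\ref{Th2}, $\partial M/\partial t$ enumerates rooted maps with an additional pointed (and discounted) non-root vertex, which corresponds under the bijection to rooted-and-vertex-pointed mobiles. Such mobiles are either rooted at a white vertex or at a black vertex; the white-rooted contribution is $R/z - t$ (the same discounting of the single-vertex mobile as in \eqref{eqwb}), while the black-rooted contribution is precisely $T$. Summing the two cases yields the stated identity.

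The main obstacle is getting the Motzkin-path encoding and its balance condition exactly right, including the three distinct degree shifts ($+1$, $+2$, $+0$) for $L$, $Q$, and $T$ and the correct choice among $B$, $B^{(+1)}$, and $\overline B$ in each case. These reflect subtle differences in how the root corner of a black vertex is treated (whether an extra edge is attached, whether the root white vertex is counted, and where the path is allowed to start), and a sign or off-by-one error here would silently produce the wrong degree constraint. I expect to verify the encoding by checking it reduces to Proposition~\ref{Pro1} upon restricting to bipartite mobiles (no black--black edges, so $L \equiv 0$ and only even-degree black vertices survive), which should recover equation \eqref{eqPro11} and provide a reassuring consistency check.
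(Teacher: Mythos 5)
Your overall strategy is the same as the paper's: decompose rooted mobiles at the root, encode the cyclic arrangement of neighbours and legs around a black vertex by a Motzkin path so that the leg/white-neighbour constraint becomes the bridge condition, and pass to maps via Theorem~\ref{Th2} by splitting according to the colour of the root vertex. However, your Motzkin dictionary is swapped, and this breaks the derivation of the equations \eqref{eqPro21}. Since $B_{\ell,m}=[t^\ell u^m]B(t,u)$ with $t$ marking steps of type $0$ and $u$ marking steps of type $-1$, the exponent $\ell$ (attached to $L$) counts $0$-steps and $m$ (attached to $R$) counts $-1$-steps; the correct encoding is therefore: black neighbour $\leftrightarrow$ step $0$ (subtree counted by $L$), white neighbour $\leftrightarrow$ step $-1$ (subtree counted by $R$), leg $\leftrightarrow$ step $+1$, exactly as stated in the paper's proof. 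You assign white neighbours to $0$-steps and black neighbours to $-1$-steps, and in your second paragraph you pair the $\ell$ horizontal steps with white subtrees ``weighted by $R$'', which would produce $R^\ell L^m$ rather than the $L^\ell R^m$ appearing in \eqref{eqPro21}. With your assignment the bridge condition (number of $+1$-steps equals number of $-1$-steps) forces the number of legs to equal the number of \emph{black} neighbours, not white ones, so it does not encode the defining constraint of a mobile; and the degree of the black root in the $L$-equation comes out as $2\ell+m+1$ instead of $\ell+2m+1$. As written, your argument derives a different (incorrect) system.

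The fix is simply to transpose the dictionary, after which your argument coincides with the paper's. Note that the consistency check you propose at the end would have exposed the error: the bipartite specialization is the restriction to terms with $\ell=0$ (no black--black edges), which is only compatible with $\ell$ counting black neighbours; carrying it out recovers \eqref{eqPro11} from $R=tz+RQ$ together with $B^{(+1)}_{0,m}=\binom{2m+1}{m}=\binom{2(m+1)-1}{m+1}$, whereas under your labelling $\ell=0$ would instead suppress all white neighbours and the reduction fails.
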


\begin{proof}
The system (\ref{eqPro21}) is just a rephrasement of the recursive structure of rooted
mobiles. Note that the numbers $B_{\ell,m}$ and $B_{\ell,m}^{(+1)}$ are used to 
count the number of ways to circumscribe a specific black vertex and considering white vertices, black vertices and ``legs'' as steps $-1$, $0$ and $+1$. The generating function $T$ given in (\ref{eqPro21bis}) is
then the generating function of rooted mobiles where the root vertex is black.

Finally, the equation (\ref{eqPro22}) follows from Theorem~\ref{Th2} since $R/z-t$ corresponds
to rooted mobiles with at least one black vertex where the root vertex is white and $T$ corresponds
to rooted mobiles where the root vertex is black.
\end{proof}

\begin{remark}
Note that Proposition~\ref{Pro1} is a special case of Proposition~\ref{Pro2}. 
We just have to restrict to the terms corresponding to $\ell = 0$ since bipartite
mobiles have no black--black edges. In particular, the series for $L$ is
not needed any more and the second and third equations from (\ref{eqPro21}) can be
used to easily eliminate $Q$ in order to recover the equation (\ref{eqPro11}).
\end{remark}

\section{Asymptotic Enumeration}\label{sec3}

In this section we prove the asymptotic expansion (\ref{eqTh11}). 
It turns out that it is much easier to start with bipartite maps.
Actually, the bipartite case has already been treated by Bender and Canfield \cite{BC}.
However, we apply a slightly different approach, which will then be extended
to cover the general case as well the central limit theorem.

\subsection{Bipartite maps}

Let $D$ be a non-empty subset of even positive integers different from $\{2\}$.
Then by Proposition~\ref{Pro1} the counting problem reduces to the discussion 
of the solutions $R_D = R_D(t,z)$ of the functional equation
\begin{equation}\label{eqPro11-2}
R_D = tz + z\sum_{2i\in D} {2i-1 \choose i} R_D^i
\end{equation}
and the generating function $M_D(t,z)$ that satisfies the relation
\begin{equation}\label{eqPro12-2}
\frac{\partial M_D}{\partial t} = 2\left( R_D/z - t\right).
\end{equation}

Let $d = {\rm gcd}\{i : 2i \in D\}$. Then for combinatorial reasons it 
follows that there only exist maps with $n$ edges for $n$ that are divisible by $d$.
This is reflected by the fact that the equation (\ref{eqPro11-2}) can we rewritten
in the form
\begin{equation}\label{eqPro11-2-1}
\tilde R = t + \sum_{2i\in D} {2i-1 \choose i} z^{i/d} \tilde R^i,
\end{equation}
where we have substituted $R_D(t,z) = z \tilde R(t,z^d)$. (Recall that we finally 
work with $R_D/z$.) 

\begin{lmm}\label{Le1}
There exists an analytic function $\rho(t)$ with $\rho(1) > 0$ and $\rho'(1) \ne 0$
that is defined in a neighborhood of $t=1$, and there exist analytic functions 
$g(t,z)$, $h(t,z)$ with $h(1,\rho(1))> 0$ that are defined in 
a neighborhood of $t=1$ and $z=\rho(1)$ such that the unique solution $R_D = R_D(t,z)$ of the
equation (\ref{eqPro11-2}) that is analytic at $z=0$ and $t=0$ can be represented as
\begin{equation}\label{eqLe1}
R_D = g(t,z) - h(t,z) \sqrt{1 - \frac z{\rho(t)}}.
\end{equation}
Furthermore, the values $z = \rho(t) e(2\pi i j/d)$, $j\in \{0,1,\ldots,d-1\}$, are the
only singularities of the function $z\mapsto R_D(t,z)$ on the disc $|z| \le \rho(t)$,
and there exists an analytic continuation of $R_D$ to the range 
$|z| < |\rho(t)| + \eta$, $\arg(z- \rho(t) e(2\pi i j/d)) \ne 0$, $j\in \{0,1,\ldots,d-1\}$.
\end{lmm}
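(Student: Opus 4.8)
The plan is to exploit the fact that \eqref{eqPro11-2} is \emph{linear} in $z$, which lets me pass to the inverse function and read off the singular structure directly, in the spirit of the smooth inverse-function schema for simply generated families. Writing $\Phi_0(w) = t + \sum_{2i\in D}\binom{2i-1}{i} w^i$, equation \eqref{eqPro11-2} reads $R_D = z\,\Phi_0(R_D)$, i.e. it is equivalent to $z = \psi(t,w) := w/\Phi_0(w)$ evaluated at $w = R_D$. Since $\psi(t,0)=0$ and $\partial_w\psi(t,0) = 1/t \neq 0$ for $t$ near $1$, the combinatorial solution $R_D(t,\cdot)$ (the branch with $R_D(t,0)=0$ and nonnegative coefficients, which is the one analytic at the origin) is precisely the analytic inverse of $w\mapsto\psi(t,w)$ near $0$, and its singularities in $z$ are the critical values of $\psi$.

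First I would locate the relevant critical point. A short computation gives $\partial_w\psi(t,w) = (t - G(w))/\Phi_0(w)^2$, where $G(w) := \sum_{2i\in D}(i-1)\binom{2i-1}{i} w^i$. Because $D$ is not contained in $\{1,2\}$, the set $\{i : 2i\in D\}$ contains some $i\geq 2$, so $G$ is a nontrivial power series with nonnegative coefficients, $G(0)=0$, strictly increasing on the positive axis. Its radius of convergence coincides with that of $\Phi_0$, namely $W_0 = 1/4$ when $D$ is infinite (using $\binom{2i-1}{i} = \tfrac12\binom{2i}{i}$, so $\binom{2i-1}{i}^{1/i}\to 4$) and $W_0 = \infty$ when $D$ is finite; moreover $G(w)\to\infty$ as $w\uparrow W_0$, since the terms of $G$ at $w=1/4$ grow like $\tfrac{\sqrt i}{2\sqrt\pi}$. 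Hence $G\colon (0,W_0)\to(0,\infty)$ is an increasing bijection, and there is a unique $\tau = \tau(t)\in(0,W_0)$ with $G(\tau(t)) = t$. This $\tau$ is the first positive zero of $\partial_w\psi$, and I set $\rho(t) := \psi(t,\tau(t))$.

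Next I would extract the square-root singularity. Since $\partial_w^2\psi(t,\tau) = -G'(\tau)/\Phi_0(\tau)^2 < 0$ (the cross term vanishes because $\partial_w\psi(t,\tau)=0$), the point $\tau$ is a nondegenerate maximum of $\psi(t,\cdot)$: as $z\uparrow\rho(t)$, the inverse $R_D$ increases to the finite value $\tau$ with a vertical tangent. Local analytic inversion at this simple fold yields exactly the representation \eqref{eqLe1}, with $g,h$ analytic near $(1,\rho(1))$ and $h(1,\rho(1)) = \sqrt{2\rho(1)\,\Phi_0(\tau(1))^2/G'(\tau(1))} > 0$. The implicit function theorem applied to $G(\tau)=t$ (where $G'(\tau)>0$) shows that $\tau(t)$, hence $\rho(t) = \psi(t,\tau(t))$, is analytic near $t=1$ with $\rho(1)>0$; differentiating and using $\partial_w\psi(t,\tau)=0$ gives $\rho'(t) = \partial_t\psi(t,\tau) = -\tau/\Phi_0(\tau)^2 < 0$, so in particular $\rho'(1)\neq 0$.

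Finally I would account for the $d$ conjugate singularities and the continuation, using the substitution $R_D(t,z) = z\,\tilde R(t,z^d)$ from \eqref{eqPro11-2-1}. The reduced series $\tilde R(t,\cdot)$ has nonnegative coefficients whose support is $\{0\}$ together with the numerical semigroup generated by $\{i/d : 2i\in D\}$ (a set of positive integers with gcd $1$, by definition of $d$); it is therefore aperiodic, and by the standard argument for nonnegative aperiodic series (the Daffodil lemma) its \emph{unique} singularity on its circle of convergence lies on the positive real axis, at $Z=\rho(t)^d$, where, via the local biholomorphism $z\mapsto z^d$ near $z=\rho(t)>0$, it inherits the square-root branch point found above for $R_D$. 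Pulling back through $z\mapsto z^d$, the $d$ preimages $z=\rho(t)e^{2\pi i j/d}$ of that single singular point are then exactly the singularities of $R_D$ on $|z|=\rho(t)$, each of square-root type, and the slit continuation of $\tilde R$ beyond its circle of convergence transfers to the stated continuation of $R_D$ on $|z|<|\rho(t)|+\eta$ away from the $d$ rays $\arg(z-\rho(t)e^{2\pi i j/d})=0$. I expect the main obstacle to be precisely the existence of the fold in the second step for \emph{infinite} $D$: a priori the singularity of $R_D$ could arise from $R_D$ reaching the boundary $W_0$ of the domain of $\Phi_0$ rather than from a genuine branch point. This is exactly what the divergence $G(w)\to\infty$ as $w\uparrow W_0$ rules out, forcing $\tau$ to lie strictly inside $(0,W_0)$; the secondary, more bookkeeping-like difficulty is verifying the aperiodicity that pins the singularities down to the $d$ scaled roots of unity.
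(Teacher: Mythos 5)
Your proposal is correct, and it arrives at exactly the same characteristic equation as the paper: your condition $G(\tau)=t$ is, at $t=1$, precisely the equation $\sum_{2i\in D}(i-1)\binom{2i-1}{i}R_0^i=1$ that the paper's argument reduces to, and your key analytic input --- that $G$ has radius of convergence $1/4$ in the infinite case and diverges as $w\uparrow 1/4$ because its terms behave like $\sqrt{i}/(2\sqrt{\pi})$, forcing the critical point strictly inside the disc of convergence --- is word for word the crucial observation in the paper's proof. The packaging differs: the paper treats \eqref{eqPro11-2} as a general implicit equation $R=F(t,z,R)$ with nonnegative coefficients and invokes the square-root-singularity theorem for such equations (\cite[Theorem~2.21]{Drmotabook}), verifying the system $R_0=F(1,\rho,R_0)$, $1=F_R(1,\rho,R_0)$ together with the positivity of $F_z$, $F_{RR}$, $F_t$; you instead exploit the linearity of the equation in $z$ to invert explicitly via $z=w/\Phi_0(w)$ and perform the singular inversion at the fold by hand. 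Your route is more self-contained and yields explicit constants ($h(1,\rho(1))$ and $\rho'(t)=-\tau/\Phi_0(\tau)^2$, whereas the paper only records $\rho'(1)\neq 0$), but it is tied to the special form $R=z\Phi_0(R)$ and would not carry over to the strongly connected system needed for Lemma~\ref{Le3}, which is why the paper phrases everything through the general implicit-equation theorem. Your treatment of the $d$ conjugate singularities via $R_D(t,z)=z\tilde R(t,z^d)$, aperiodicity of $\tilde R$, and pullback through $z\mapsto z^d$ is exactly the paper's reduction to \eqref{eqPro11-2-1}. You also correctly identified and disposed of the one genuine danger point (the singularity arising from $R_D$ hitting the boundary of convergence of $\Phi_0$ before a fold occurs), which is the same issue the paper flags as "the solutions are inside the region of convergence of $F$."
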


\begin{proof}
From general theory (see~\cite[Theorem 2.21]{Drmotabook}), we know that an equation of the form $R=F(t,z,R)$, where $F$ is a power series with non-negative coefficients, has a square-root singularity if there are positive solutions $(\rho,R_0)$ to the following system:

\begin{displaymath}
R_0 = F(1,\rho,R_0), \qquad 1 = F_R(1,\rho,R_0).
\end{displaymath}

It is important to observe that the solutions are inside the region of convergence of $F$. Besides, one has to check several analytic conditions on the derivatives of $F$ evaluated at this singular point. For a more detailed proof, the reader can refer to the work of Bender and Canfield~\cite{BC}.
\end{proof}

It is now relatively easy to obtain similar properties for
$M_D(t,z)$.

\begin{lmm}\label{Le2}
The function $M = M_D(t,z)$ that is given by (\ref{eqPro12-2}) has the representation 
\begin{equation}\label{eqLe2}
M_D = g_2(t,z) + h_2(t,z) \left(1 - \frac z{\rho(t)} \right)^{3/2}
\end{equation}
in a neighborhood of $t=1$ and $z=\rho(1)$, where 
the functions $g_2(t,z)$, $h_2(t,z)$ are analytic in a neighborhood of $t=1$ and $z=\rho(1)$ and we have $h_2(1,\rho(1)) > 0$.
Furthermore, the values $z = \rho(t) e(2\pi i j/d)$, $j\in \{0,1,\ldots,d-1\}$, are the
only singularities of the function $z\mapsto M_D(t,z)$ on the disc $|z| \le \rho(t)$,
and there exists an analytic continuation of $M_D$ to the range 
$|z| < |\rho(t)| + \eta$, $\arg(z- \rho(t) e(2\pi i j/d)) \ne 0$, $j\in \{0,1,\ldots,d-1\}$.
\end{lmm}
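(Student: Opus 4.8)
The plan is to integrate the relation \eqref{eqPro12-2}, namely $\partial M_D/\partial t = 2(R_D/z - t)$, with respect to $t$, feeding in the square-root representation of $R_D$ from Lemma~\ref{Le1}. Writing $\Phi(t,z) = 1 - z/\rho(t)$ and substituting \eqref{eqLe1} gives
\[
\frac{\partial M_D}{\partial t} = A(t,z) - B(t,z)\sqrt{\Phi(t,z)}, \qquad A = 2\Big(\frac{g}{z} - t\Big),\quad B = \frac{2h}{z},
\]
where $A$ and $B$ are analytic in a neighbourhood of $(t,z) = (1,\rho(1))$ and, crucially, $B(1,\rho(1)) = 2h(1,\rho(1))/\rho(1) > 0$. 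Note that $(1,\rho(1))$ lies on the singular curve $z = \rho(t)$, so $\Phi$ is small there; the whole analysis is local around this point.

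The key step is to integrate the singular part in $t$. Since $\rho'(1)\neq0$, the function $\psi(t,z) := \partial_t \Phi = z\rho'(t)/\rho(t)^2$ is analytic and non-vanishing near $(1,\rho(1))$, so for fixed $z$ the map $t \mapsto s = \Phi(t,z)$ is a local analytic diffeomorphism with analytic inverse $t = t(s,z)$. Substituting $s = \Phi(t',z)$ in $\int -B\sqrt{\Phi}\,dt'$ turns it into $\int -\big(B/\psi\big)\big|_{t'=t(s,z)}\sqrt{s}\,ds$, an integral of the form $\int \Psi(s,z)\sqrt{s}\,ds$ with $\Psi$ analytic in $s$ near $0$. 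Term-by-term integration of $\Psi(s,z) = \sum_{k\ge0} a_k(z)s^k$ produces $\sum_k \frac{a_k(z)}{k+3/2}s^{k+3/2} = s^{3/2}\widetilde H(s,z)$ with $\widetilde H$ analytic; back-substituting $s = \Phi(t,z)$ yields exactly a term $h_2(t,z)\Phi(t,z)^{3/2}$ with $h_2(t,z) = \widetilde H(\Phi(t,z),z)$ analytic near $(1,\rho(1))$. The analytic part $\int A\,dt'$ contributes to $g_2$, and together this gives the representation \eqref{eqLe2}.

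Two points require care. First, the constant of integration is a function $C(z)$ that must be shown analytic near $z=\rho(1)$; I would fix a base point $t_0$ close to $1$ with $\rho(t_0) > \rho(1)$ (possible since $\rho'(1)\ne0$), so that by Lemma~\ref{Le1} the map $z\mapsto M_D(t_0,z)$ is analytic in a full neighbourhood of $\rho(1)$ (the point $\rho(1)$ lying strictly inside $|z| < \rho(t_0)$ and away from the boundary singularities $\rho(t_0)e(2\pi i j/d)$), and the lower-limit term $\Phi(t_0,z)^{3/2}$ is analytic there too since $\Phi(t_0,\rho(1)) = 1 - \rho(1)/\rho(t_0) \ne 0$. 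Absorbing these analytic pieces into $g_2$ settles the representation. Second, the positivity $h_2(1,\rho(1)) > 0$ follows by matching the most singular ($\Phi^{1/2}$) terms on both sides of $\partial_t M_D = \partial_t(g_2 + h_2\Phi^{3/2})$, which gives $\tfrac32 h_2(1,\rho(1))\,\psi(1,\rho(1)) = -B(1,\rho(1))$, i.e. $h_2(1,\rho(1)) = -4h(1,\rho(1))/(3\rho'(1))$. The sign is then decided by $\rho'(1) < 0$, which reflects the combinatorial fact that increasing the vertex weight $t$ inflates all coefficients and hence shrinks the radius of convergence $\rho(t)$ in $z$.

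Finally, the statement on the location of the singularities of $z\mapsto M_D(t,z)$ and the analytic continuation to $|z| < |\rho(t)| + \eta$ away from the rays $\arg(z - \rho(t)e(2\pi i j/d)) = 0$ is inherited directly from the corresponding properties of $R_D$ in Lemma~\ref{Le1}, since $M_D$ is obtained from $R_D$ by integration in $t$, which transfers the $d$-fold symmetry and the continuation. The main obstacle is the bookkeeping for the integration constant, i.e. making sure that integrating in $t$ introduces no spurious $z$-singularity at $\rho(1)$; the substitution $s=\Phi(t,z)$ together with the choice of a base point $t_0$ of larger radius are exactly what make this transparent.
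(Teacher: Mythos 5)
Your argument is correct and is in substance the same as the paper's: the paper disposes of this lemma by simply citing the transfer result \cite[Lemma 2.27]{Drmotabook}, and your explicit integration of the square-root singularity in $t$ --- the substitution $s=\Phi(t,z)$, term-by-term integration giving the $\Phi^{3/2}$ factor, the choice of a base point $t_0$ with $\rho(t_0)>\rho(1)$ to control the integration constant, and the sign determination $h_2(1,\rho(1))=-4h(1,\rho(1))/(3\rho'(1))>0$ via the monotonicity of $\rho$ on the positive reals --- is precisely a proof of that cited transfer step in the present setting. I see no gaps.
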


\begin{proof}
This is a direct application of \cite[Lemma 2.27]{Drmotabook}.
\end{proof}

In particular it follows that $M_D(1,z)$ has the singular representation
\[
M_D = g_2(1,z) + h_2(1,z) \left(1 - \frac z{\rho(1)} \right)^{3/2}
\]
around $z= \rho(1)$. The singular representations are of the same kind 
around $z = \rho(1) e(2\pi i j/d)$, $j\in \{1,\ldots,d-1\}$ and we have
the analytic continuation property. Hence it follows by usual singularity
analysis (see for example \cite[Corollary 2.15]{Drmotabook}) that 
there exists a constant $c_D> 0$ such that
\[
[z^n] M_D(1,z) \sim c_D n^{-5/2} \rho(1)^{-n}, \qquad n \equiv 0 \bmod d,
\]
which completes the proof of the asymptotic expansion in the bipartite case.

\subsection{General Maps}

We now suppose that $D$ contains at least one odd number. It is easy to observe that
in this case we have $[z^n] M_D(1,z) > 0$ for $n\ge n_0$ (for some $n_0$), so
we do not have to deals with several singularities.

By Proposition~\ref{Pro2} we have to consider the system of equations
for $L_D = L_D (t,z)$, $Q_D=Q_D(t,z)$, $R_D=R_D(t,z)$:
\begin{align}
L_D &= z \sum_{i\in D}  \sum_m B_{i-2m-1,m} L_D^{i-2m-1} R_D^m,  \nonumber \\
Q_D &= z \sum_{i\in D}  \sum_m B_{i-2m-2,m}^{(+1)} L_D^{i-2m-2} R_D^m,  \label{eqPro21-2} \\
R_D &= \frac{t z}{1 - Q_D},  \nonumber
\end{align}
and also the function
\[
T_D = T_D(t,z) = 1 + \sum_{i\in D} \sum_m \overline B_{i-2m,m} L_D^{i-2m} R_D^m.
\]

\begin{lmm}\label{Le3}
There exists an analytic function $\rho(t)$ with $\rho(1) > 0$ and $\rho'(1) \ne 0$
that is defined in a neighborhood of $t=1$, and there exist analytic functions 
$g(t,z)$, $h(t,z)$ with $h(1,\rho(1))> 0$ that are defined in 
a neighborhood of $t=1$ and $z=\rho(1)$ such that 
\begin{equation}\label{eqLe3}
R_D/z-t + T_D = g(t,z) - h(t,z) \sqrt{1 - \frac z{\rho(t)}}.
\end{equation}
Furthermore, the value $z = \rho(t)$ is the
only singularity of the function $z\mapsto R_D/z-t + T_D$ on the disc $|z| \le \rho(t)$,
and there exists an analytic continuation of $R_D$ to the range 
$|z| < |\rho(t)| + \eta$, $\arg(z- \rho(t)) \ne 0$.
\end{lmm}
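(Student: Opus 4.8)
The plan is to proceed exactly as in the proof of Lemma~\ref{Le1}, replacing the scalar square-root-singularity theorem by its multivariate analogue for strongly connected positive systems (the Drmota--Lalley--Woods theorem, cf.\ the system version of \cite[Theorem 2.21]{Drmotabook}). First I would eliminate $Q_D$ from the system (\ref{eqPro21-2}): substituting $Q_D = z\,\Psi(L_D,R_D)$ into $R_D = tz/(1-Q_D)$ collapses the three equations to a coupled pair for $\mathbf{y} = (L_D,R_D)$,
\begin{align*}
L_D &= z\,\Phi(L_D,R_D), \\
R_D &= tz + z\,R_D\,\Psi(L_D,R_D),
\end{align*}
where $\Phi(L,R) = \sum_{i\in D}\sum_m B_{i-2m-1,m}\,L^{i-2m-1}R^m$ and $\Psi(L,R) = \sum_{i\in D}\sum_m B^{(+1)}_{i-2m-2,m}\,L^{i-2m-2}R^m$. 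Writing this as $\mathbf{y} = \mathbf{F}(t,z,\mathbf{y})$, the right-hand side is, by (\ref{eqB1})--(\ref{eqB3}), a power series with non-negative coefficients in $z$, $t$, $L_D$ and $R_D$, which is precisely the setting in which the system version of the theorem applies.

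Next I would verify the structural hypotheses. \emph{Non-negativity} is immediate. \emph{Strong connectivity} of the dependency graph amounts to checking that $\Phi$ genuinely involves $R$ and that $\Psi$ genuinely involves $L$; both hold as soon as $D$ contains an element $i\ge 3$ (which it does, since $D\not\subseteq\{1,2\}$), because then $\Phi$ has an admissible term with $m\ge 1$ and $\Psi$ has one with exponent $i-2m-2\ge 1$, so that $L_D$ and $R_D$ are mutually coupled. \emph{Aperiodicity} (hence a single dominant singularity, in contrast to the $d$ singularities of the bipartite case) follows from the presence of an odd element of $D$, which forces the relevant gcd to be $1$. One then solves the characteristic system -- the two equations above at $t=1$ together with $\det\!\left(I - \mathbf{F}_{\mathbf{y}}(1,\rho,\mathbf{y}_0)\right) = 0$ -- for a positive triple $(\rho,L_0,R_0)$.

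With the hypotheses in place, the theorem yields a square-root singularity for each coordinate, say $R_D = g_R(t,z) - h_R(t,z)\sqrt{1 - z/\rho(t)}$ and likewise for $L_D$, where $\rho(t)$ is obtained by the analytic implicit function theorem applied to the characteristic system and satisfies $\rho(1) > 0$ and $\rho'(1) \ne 0$ (the latter by the usual strict-positivity/monotonicity argument). It then remains to transfer this to $R_D/z - t + T_D$. The term $R_D/z - t$ inherits the singularity directly, and $T_D$ is an analytic function of $z$, $L_D$ and $R_D$, so substituting the square-root-singular $L_D$ and $R_D$ again produces an expansion of the form $g(t,z) - h(t,z)\sqrt{1 - z/\rho(t)}$. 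Since both $R_D/z$ and $T_D$ are generating functions with non-negative coefficients, each carries a positive $h$, whence $h(1,\rho(1)) > 0$ for the sum. Uniqueness of the singularity on $|z|\le\rho(t)$ and the analytic continuation past the slit then follow from aperiodicity exactly as in Lemma~\ref{Le1}.

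The main obstacle I expect is the \emph{analytic} (rather than merely algebraic) nature of the system when $D$ is infinite: one must ensure that $\Phi$ and $\Psi$ have a genuine positive radius of convergence in all variables and, crucially, that the singular solution $(\rho,L_0,R_0)$ lies strictly inside the common domain of convergence, so that $\mathbf{F}$ and all the derivatives entering the theorem are finite and analytic there. Verifying this interiority, together with the strong-connectivity bookkeeping over the possibly infinite index set $D$, is the technical heart of the argument; once it is secured, the conclusion follows from the standard machinery.
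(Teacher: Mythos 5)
Your proposal follows essentially the same route as the paper: eliminate $Q_D$ to obtain a positive, strongly connected system of two equations for $(L_D,R_D)$, apply the system version of the square-root-singularity theorem (\cite[Theorem 2.33]{Drmotabook}), and transfer the resulting singular expansion to $R_D/z - t + T_D$. The one step you flag but do not carry out --- verifying that the characteristic point lies strictly inside the common domain of convergence of the system when $D$ is infinite --- is exactly the point the paper also isolates as the technical heart of the argument, so the two proofs coincide in both structure and emphasis.
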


\begin{proof}
Instead of a single equation, we have to deal with the strongly connected system~\eqref{eqPro21-2}, which is known to have similar analytic properties (see \cite[Theorem 2.33]{Drmotabook}). As in Lemma~\ref{Le1}, the main observation is that the singular point lies within the region of convergence of the equations, which follows directly in the finite case, but gets more technical in the infinite case.
\end{proof}

Lemma~\ref{Le3} shows that we are precisely in the same situation as
in the bipartite case (actually, it is slightly easier since there is only
one singularity on the circle $|z| = \rho(t)$). Hence we immediately
get the same property for $M_D$ as stated in Lemma~\ref{Le2} and
consequently the proposed asymptotic expansion (\ref{eqTh11}).

\section{Central Limit Theorem for Bipartite Maps}\label{sec4}

Based on this previous result, we now extend our analysis
to obtain a central limit theorem. Actually, this is immediate
if the set $D$ is finite, whereas the infinite case 
needs much more care.

Let $D$ be a non-empty subset of even positive integers different from $\{2\}$.
Then by Proposition~\ref{Pro1} the generating functions 
$R_D = R_D(t,z,(x_{2i})_{2i\in D})$ and $M_D = M_D(t,z,(x_{2i})_{2i\in D})$
satisfy the equations
\begin{equation}\label{eqPro11-3}
R_D = tz + z\sum_{2i\in D} x_{2i} {2i-1 \choose i} R_D^i
\end{equation}
and 
\begin{equation}\label{eqPro12-3}
\frac{\partial M_D}{\partial t} = 2\left( R_D/z - t\right).
\end{equation}

If $D$ is finite, then the number of variables is finite, too, and
we can apply \cite[Theorem 2.33]{Drmotabook} to obtain a representation of 
$R_D$ of the form
\begin{equation}\label{eqRgen}
R_D = g(t,z,(x_{2i})_{2i\in D}) - h(t,z,(x_{2i})_{2i\in D}) \sqrt{1 - \frac z{\rho(t,(x_{2i})_{2i\in D})}},
\end{equation}
a proper extension of the
transfer lemma \cite[Lemma 2.27]{Drmotabook} (where the variables $x_{2i}$
are considered as additional parameters) leads to
\begin{equation}\label{eqMgen}
M_D = g_2(t,z,(x_{2i})_{2i\in D}) + h_2(t,z,(x_{2i})_{2i\in D}) \left(1 - \frac z{\rho(t,(x_{2i})_{2i\in D})}\right)^{3/2},
\end{equation}
and finally 
\cite[Theorem 2.25]{Drmotabook} implies a multivariate central limit 
theorem for the random vector ${\bf X}_n = (X_n^{(2i)})_{2i\in D}$
of the proposed form.  

Thus, we just have to concentrate on the infinite case. 
Actually, we proceed there in a similar way; however, we have to
take care of infinitely many variables. There is no real problem to derive
the same kind of representation (\ref{eqRgen}) and (\ref{eqMgen}) if $D$ is
infinite. Everything works in the same way as in the finite case,
we just have to assume that the variables $x_i$ are uniformly bounded.
And of course we have to use a proper notion of analyticity in
infinitely many variables. We only have to apply 
the functional analytic extension of the above cited theorems
that are given in \cite{DGM}. Moreover, in order to obtain 
a proper central limit theorem we need a proper 
adaption of \cite[Theorem~3]{DGM}. In this theorem we have
also a single equation $y = F(z,(x_i)_{i\in I},y)$ for a 
generating function $y = y(z,(x_i)_{i\in I})$ that encodes the
distribution of a random vector $(X_n^{(i)})_{i\in I}$ in the form
\[
y = \sum_n y_n \left( \mathbb{E} \prod_{i\in I} x_i^{X_n^{(i)}} \right) z^n,
\]
where $X_n^{(i)} = 0$ for $i> cn$ (for some constant $c> 0$) which also
implies that all appearing potentially infinite products are in fact finite.
(In our case this is satisfied since there is no vertex of degree larger than $n$
if we have $n$ edges.) As we can see from the proof of \cite[Theorem~3]{DGM},
the essential part is to provide tightness of the involved normalized random vector,
and tightness can be checked with the help of moment conditions.
It is clear that asymptotics of moments for $X_n^{(i)}$ can be calculated with
the help of derivatives of $F$, for example $\mathbb{E} X_n^{(i)} = F_{x_i}/(\rho F_z)\cdot n + O(1)$.
This follows from the fact all information on the asymptotic behavior of the moments 
is {\it encoded} in the derivatives of the singularity 
$\rho(z,(x_i)_{i\in I})$ and by implicit differentiation these derivatives 
relate to derivatives of $F$. More precisely, \cite[Theorem~3]{DGM} says that 
the following conditions are sufficient to deduce tightness of the
normalized random vector:
\[
\sum_{i\in I} F_{x_i} < \infty, \qquad \sum_{i\in I} F_{yx_i}^2 < \infty, \qquad \sum_{i\in I} F_{x_ix_i} < \infty,
\]\vspace{-20pt}
\begin{align*}
F_{zx_i} &= o(1), & F_{zx_ix_i} &= o(1), & F_{yyx_i} &= o(1), & F_{yyx_ix_i} &= o(1), \\
F_{zzx_i} &= O(1), & F_{zyx_i} &= O(1), & F_{zyyx_i} &= O(1), & F_{yyyx_i} &= O(1), \\
\end{align*}\vspace{-40pt}
\[
(i\to\infty),
\]
where all derivatives are evaluated at $(\rho,(1)_{i\in I},y(\rho))$.

The situation is slightly different in our case since we have to work with
$M_D$ instead of $R_D$. However, the only real difference between 
$R_D$ and $M_D$ is that the critical exponents in the singular representations
(\ref{eqRgen}) and (\ref{eqMgen}) are different, but the behavior of the singularity
$\rho(t,z,(x_i)_{i\in I})$ is precisely the same. Note that after the integration step
we can set $t=1$. Now tightness for the normalized random vector that is encoded in the
function $M_D$ follows in the same way as for $R_D$. And since the singularity 
$\rho(1,z,(x_i)_{i\in I})$ is the same, we get precisely the same conditions as 
in the case of \cite[Theorem~3]{DGM}.

This means that we just have to check the above conditions hold for 
\[
F = F(1,z,(x_{2i})_{2i\in D},y ) =  z + z\sum_{2i\in D} x_{2i} {2i-1 \choose i} y^i,
\]
where all derivatives are evaluated at $z = \rho$, $x_{2i} = 1$, and $y = R_D(\rho) < 1/4$.
However, they are trivially satisfied since 
\[
\sum_{i\ge 1}  {2i-1 \choose i} i^K y^i  < \infty
\]
for all $K> 0$ and for positive real  $y < 1/4$.

\begin{remark}
As stated in Theorem~\ref{Th1}, the results and methods extend to the general case as well. The main idea is to reduce the (positive strongly connected) system of two equations~\eqref{eqPro21-2} to a single functional equation, by applying~\cite[Theorem~3]{Drmotabook}.
\end{remark}

\section{Maps of Higher Genus}\label{sec5}

The bijection used in Section~\ref{sec2} relies solely on the orientability of the surface on which the maps are embedded. Therefore it can easily be extended to maps of higher genus, i.e., embedded on an orientable surface of genus $g \in\integers_{>0}$ (while planar maps correspond to maps of genus $0$). The main difference lies in the fact that the corresponding mobiles are no longer trees but rather \emph{one-faced} maps of higher genus, while the other properties still hold. 

However, due to the apparition of cycles in the underlying structure of mobiles, another difficulty arises. Indeed, in the original bijection, vertices and edges in mobiles could carry labels (related to the geodesic distance in the original map), subject to local constraints. In our setting, the legs actually encode the \emph{local} variations of these labels, which are thus implicit. Local constraints on labels are naturally translated into local constraints on the number of legs. But the labels have to remain consistent along each cycle of the mobiles, which gives rise to non-local constraints on the repartition of legs.

In order to deal with these additional constraints, and to be able to control the degrees of the vertices at the same time, we will now use a hybrid formulation of mobiles, carrying both labels and legs. As before, we will focus on the simpler case of mobiles coming from bipartite maps. 

\subsection{$g$-Mobiles}

\begin{definition}
Given $g \in\mathbb{Z}_{\ge 0}$, a $g$-mobile is a one-faced map of genus $g$ -- embedded on the $g$-torus -- such that there are two kinds of vertices (black and white), edges only occur as black--black edges or black--white edges, and black vertices additionally have so-called ``legs'' attached to them (which are not considered edges), whose number equals the number of white neighbor vertices.

Furthermore, for each cycle $c$ of the $g$-mobile, let  $n_{\circ}$, $n_{\rightarrow}$ and $n_{\wn}$ respectively be the numbers of white vertices on $c$, of legs dangling to the left of $c$ and of white neighbours to the left of $c$. One has the following constraint (see Figure~\ref{fig:cycle}):
\begin{equation}\label{prop:cycle} n_{\rightarrow} = n_{\circ} + n_{\wn}\end{equation}
The \emph{degree} of a black vertex is the number of half-edges plus the number of legs that are 
attached to it.
A \emph{bipartite} $g$-mobile is a $g$-mobile without black--black edges.
A $g$-mobile is called \emph{rooted} if an edge is distinguished and oriented.\\
Notice that a $0$-mobile is simply a mobile as described in Definition~\ref{def:mobiles}.
\end{definition}

\begin{figure}\label{fig:cycle}
\centering
\begin{minipage}{0.5\linewidth}
\includegraphics[width=0.95\linewidth]{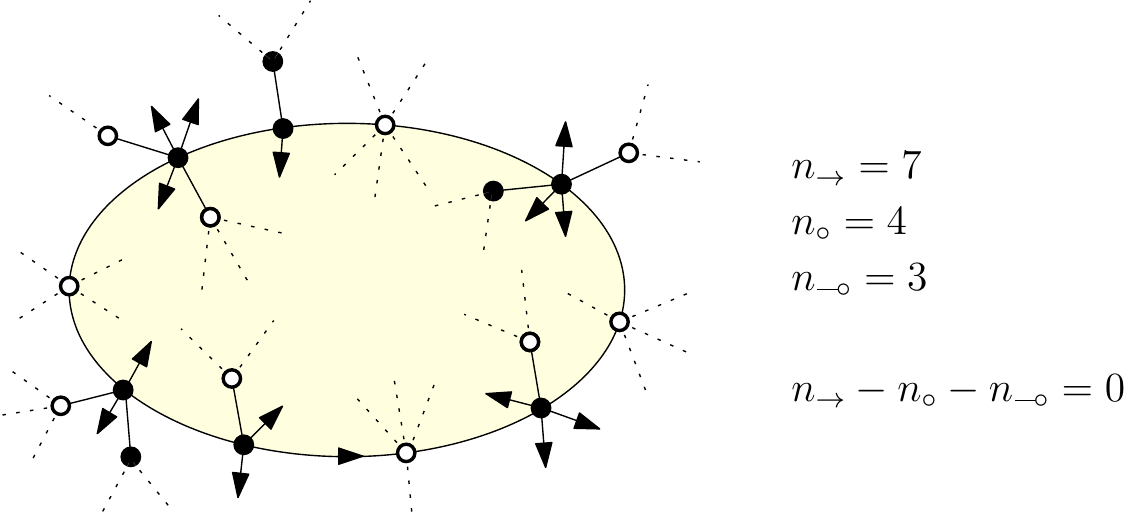}
\end{minipage}
\begin{minipage}{0.4\linewidth}
\caption{An oriented cycle in a $g$-mobile and the constraint on its left (colored area). Notice that a similar constraint holds on its right, but is necessarily satisfied thanks to the properties of a $g$-mobile.}
\end{minipage}
\end{figure}

\begin{thm}\label{Th3}
Given $g\ge0$, there is a bijection between $g$-mobiles that contain at least one black vertex and pointed maps of genus $g$, where white vertices in the mobile correspond to non-pointed vertices in the equivalent map, black vertices correspond to faces of the map, and the degrees of the black vertices correspond to the face valencies. This bijection induces a bijection on the edge sets so that
the number of edges is the same. (Only the pointed vertex of the map has no counterpart.)

Similarly, rooted $g$-mobiles that contain at least one black vertex are in bijection to rooted and vertex-pointed maps of genus $g$.
\end{thm}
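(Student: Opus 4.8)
\textbf{Proof plan for Theorem~\ref{Th3}.}
The plan is to lift the planar bijection of Theorem~\ref{Th2} to higher genus by inspecting the construction underlying \cite{CF,CFKS} and verifying that each step uses only the orientability of the surface, never its planarity. First I would fix $g\ge 0$ and a pointed map $(M,v_0)$ of genus $g$, and recall the classical labelling procedure: assign to every vertex of $M$ its graph distance to the pointed vertex $v_0$, and then apply the local rule that, around each face, records the labels of consecutive vertices. Because $M$ is embedded on an orientable surface, each face carries a well-defined cyclic orientation of its boundary, which is exactly the structure needed to read off the ``edge-then-leg'' decoration that produces a black vertex of the correct degree. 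I would carefully check that the resulting decorated object is a one-faced map of genus $g$ (a $g$-mobile), with black vertices in correspondence with the faces of $M$, white vertices with the non-pointed vertices, and the induced edge-bijection preserving the edge count.

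The second step is to pin down exactly what new data the genus introduces, namely the non-local cycle constraint~\eqref{prop:cycle}. In the planar case the mobile is a tree, so there are no cycles and the constraint is vacuous; in genus $g$ the underlying one-faced map has $2g$ independent cycles, and along each such cycle the implicit vertex labels must return to their starting value. I would show that the label increment along a cycle $c$ is exactly $n_\rightarrow - n_\circ - n_{\wn}$ (legs encoding $+1$ variations, white neighbours on the traversed side encoding the compensating decrements), so that label-consistency around $c$ is equivalent to~\eqref{prop:cycle}, matching the definition of a $g$-mobile. This is the place where I expect the real work to sit: one must argue that the label data, though not stored explicitly in the $g$-mobile, can be uniquely reconstructed from the leg pattern precisely when the cycle constraints hold, and that this reconstruction is the inverse of the forward map.

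Having established both directions on the decorated combinatorial objects, I would then verify that the correspondence is a genuine bijection by exhibiting the inverse map (reconstructing $M$ from a $g$-mobile with at least one black vertex via the label-based edge-closure rule of \cite{CF}) and checking that the two constructions compose to the identity in each direction. The genus is an invariant of this process because the one-faced map and the reconstructed map share the same Euler characteristic, which I would confirm by an edge/vertex/face count. Finally, for the rooted statement, I would observe, exactly as in the proof of Theorem~\ref{Th2}, that the induced bijection on edge sets transports a distinguished oriented edge of the map to a distinguished oriented edge of the $g$-mobile and vice versa; since rooting destroys no symmetries and the pointing is retained, rooted $g$-mobiles with at least one black vertex correspond bijectively to rooted and vertex-pointed maps of genus $g$. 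The main obstacle, as noted above, is the faithful encoding and recovery of the label function from the legs under the cycle constraints; the degree and edge-count bookkeeping is then routine.
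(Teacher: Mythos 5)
Your plan follows essentially the same route as the paper, which delegates the proof entirely to the genus-$g$ generalizations of the planar bijection given in \cite{CMS07} (for quadrangulations) and \cite{Cha09} (for Eulerian maps): the construction relies only on the orientability of the surface, and the only genuinely new ingredient at positive genus is the cycle constraint~\eqref{prop:cycle}, which you correctly identify as the condition that the implicit labels, encoded locally by the legs, close up consistently around the $2g$ independent cycles of the one-faced map. Nothing in your outline is wrong, but be aware that the step you yourself flag as the main obstacle --- unique recovery of the label function from the leg pattern under the cycle constraints, and verification that the two constructions are mutually inverse --- is precisely the content carried out in those references, so your proposal remains a plan at the same level of detail as the paper's one-line citation rather than a self-contained argument.
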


\begin{proof}
This generalization of the bijection to higher genus was first given in~\cite{CMS07} for quadrangulations and~\cite{Cha09} for Eulerian maps, from which we will exploit many ideas in the present section.
\end{proof}

\subsection{Schemes of $g$-Mobiles}

$g$-mobiles are not as easily decomposed as planar mobiles, due to the existence of cycles. However, they still exhibit a rather simple structure, based on \emph{scheme} extraction.\\
The \emph{$g$-scheme} (or simply the \emph{scheme}) of a $g$-mobile is what remains when we apply the following operations (see Figure~\ref{fig:g-mob}): first remove all legs, then remove iteratively all vertices of degree 1 and finally replace any maximal path of degree-2-vertices by a single edge.

Once these operations are performed, the remaining object is still a one-faced map of genus $g$, with black and white vertices (white--white edges can now occur), where the vertices have minimum degree 3.

\begin{figure}
\centering
\includegraphics[width=0.35\linewidth]{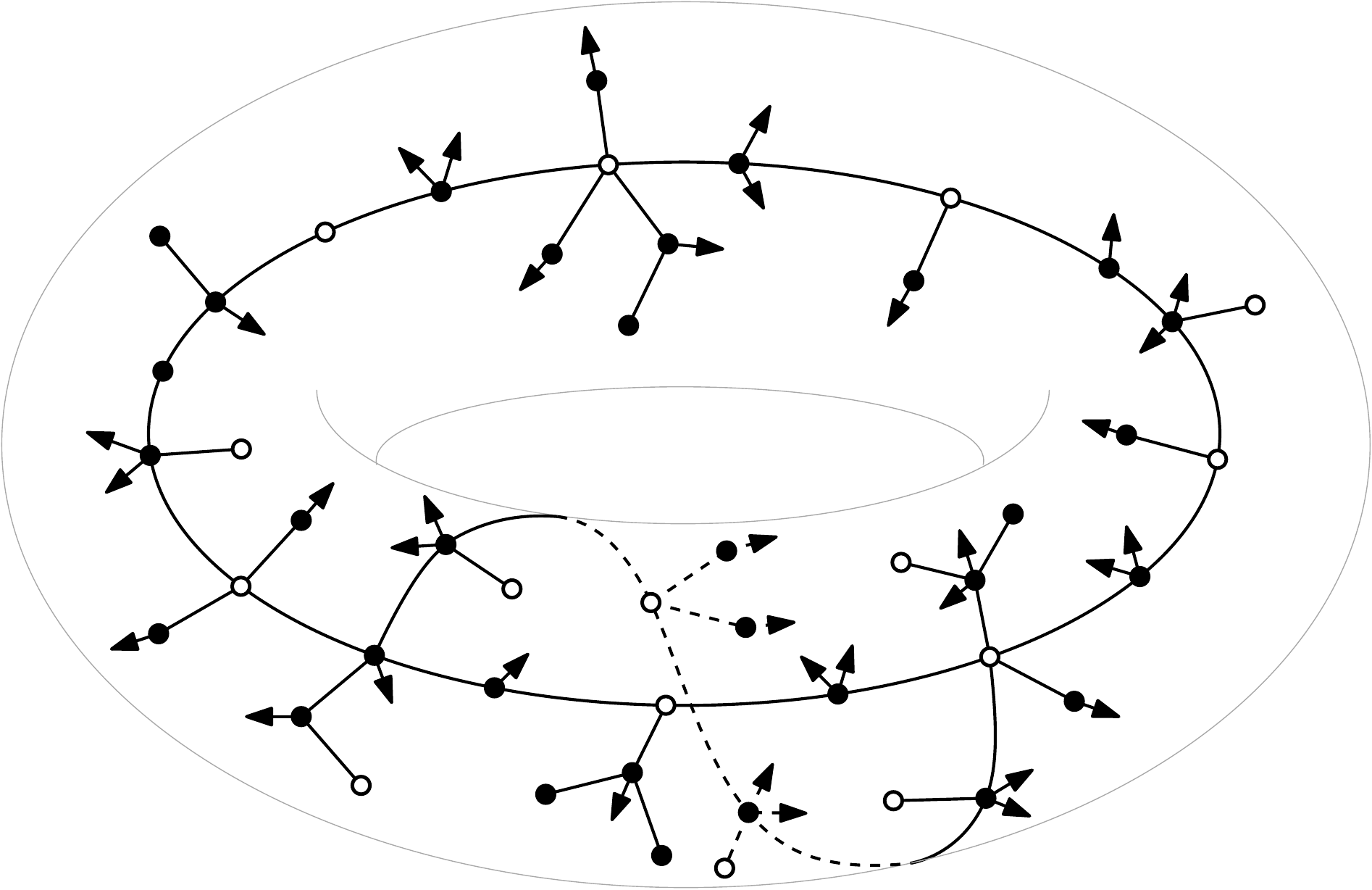} \begin{comment}\hspace{0.1\linewidth}
\includegraphics[width=0.35\linewidth]{} 

\vspace{1cm}

\includegraphics[width=0.35\linewidth]{} \hspace{0.1\linewidth}\end{comment}
\hspace{1cm}
\includegraphics[width=0.35\linewidth]{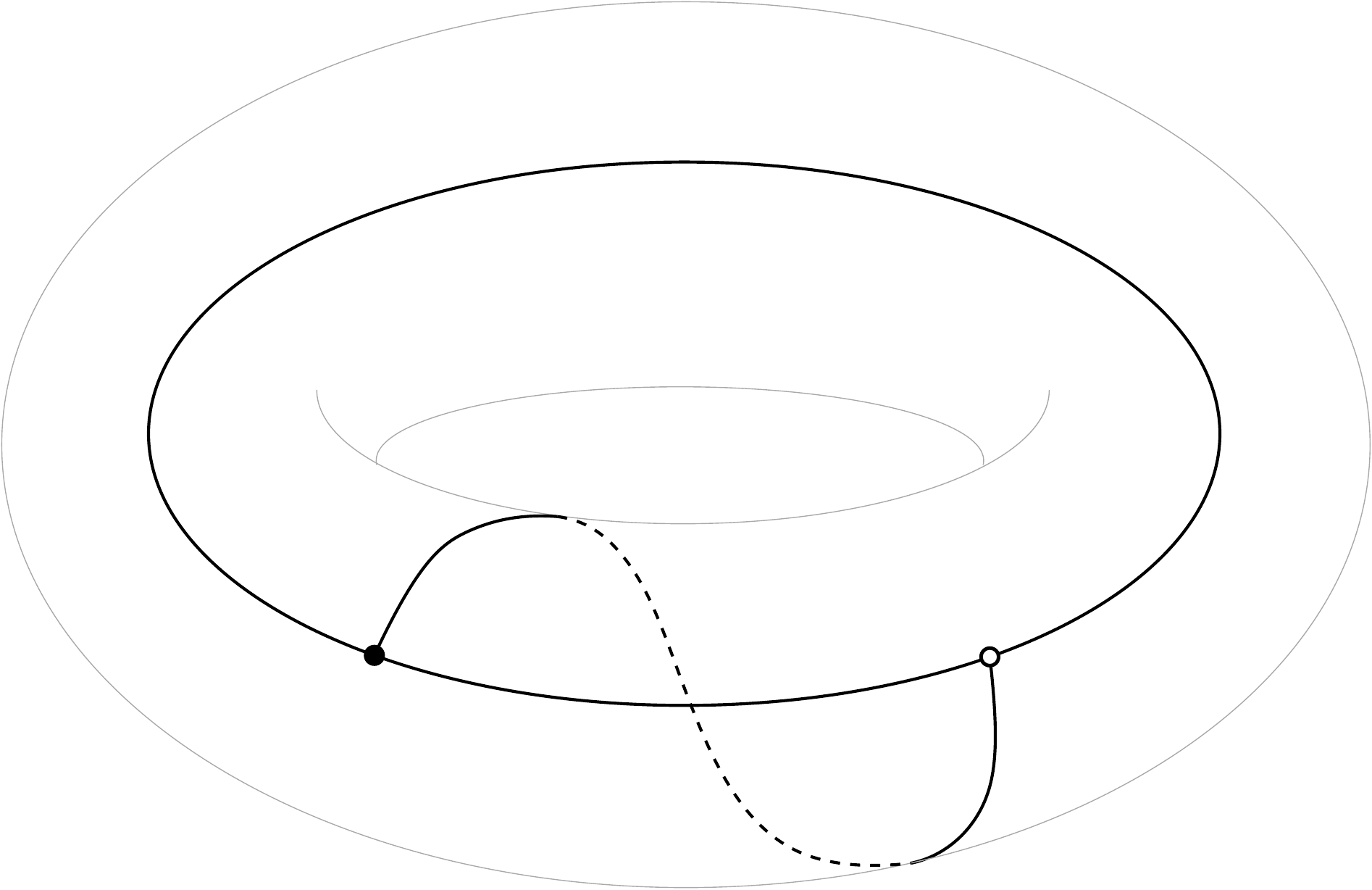}

\caption{A $1$-mobile on the torus and its scheme.}
\label{fig:g-mob}
\end{figure}

To count $g$-mobiles, one key ingredient is the fact that there is only a finite number of schemes of a given genus. Indeed, let $d_i$ be the number of degree $i$ vertices of a $g$-scheme: 
\begin{displaymath}
\sum_{k\ge 3} (i-2)d_i = \sum_{k\ge 3} id_i - 2\sum_{k\ge3} d_i = 2(\# \text{edges} - \# \text{vertices}) = 4g-2.
\end{displaymath}
The number of vertices (respectively edges) is then bounded by $4g-2$ (respectively $6g-3$), where this bound is reached for cubic schemes (see an example in Figure~\ref{fig:g-mob}).

To recover a proper $g$-mobile from a given $g$-scheme, one would have to insert a suitable planar mobile into each corner of the scheme and to substitute each edge with some kind of path of planar mobiles. Unfortunately, this cannot be done independently: Around each black vertex, the total number of legs in every corner must equal the number of white neighbors, and around each cycle, \eqref{prop:cycle} must hold.

In order to make these constraints more transparent, we will equip schemes with labels on white vertices and black corners. Now, when trying to reconstruct a $g$-mobile from a scheme, one has to ensure that the local variations are consistent with the global labelling. To be precise, the label variations are encoded as follows (see Figure~\ref{fig:labels}):

\begin{figure}
\centering
\includegraphics[width=0.7\linewidth]{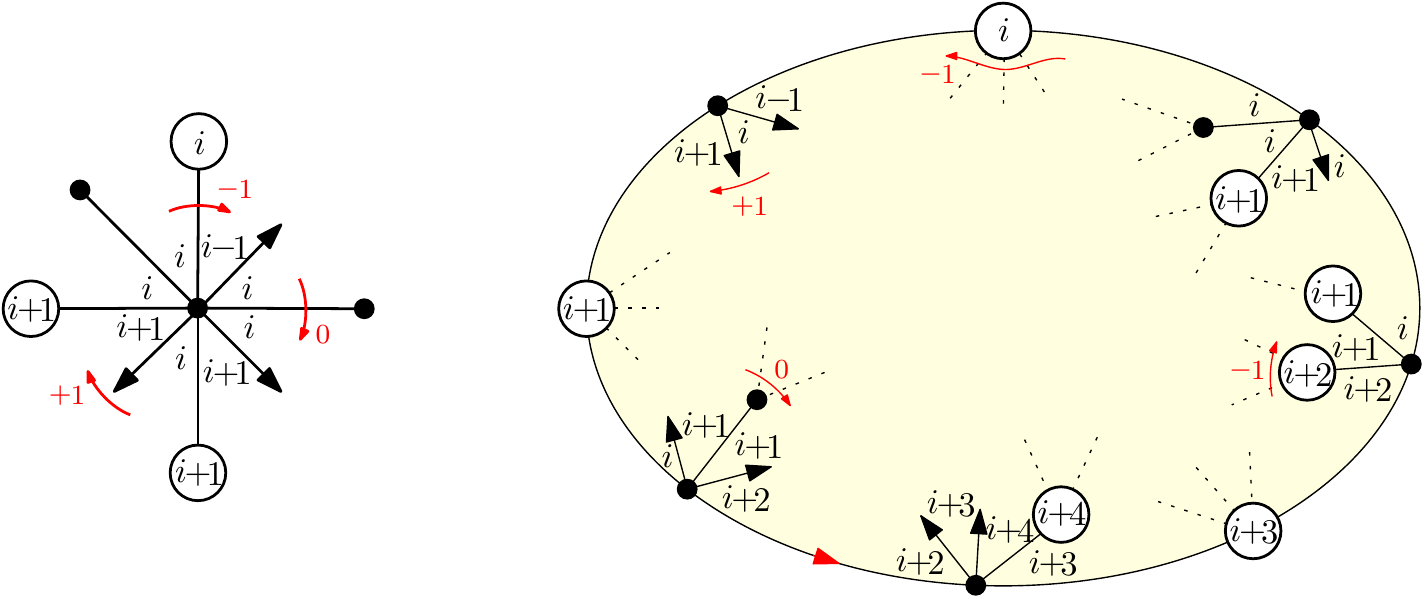}
\caption{The variations of labels around a black vertex and along an oriented cycle.}
\label{fig:labels} 
\end{figure}

\begin{itemize}
\item Around a black vertex of degree $d$, let $(l_1,\dots,l_d)$ be the labels of its corners read in clockwise order: 
\begin{displaymath}\forall i, l_{i+1}-l_{i} = 
\left\lbrace
\begin{array}{cl}
+1  & \mbox{if there is a leg between the two corresponding corners,} \\
0 & \mbox{if there is a black neighbor,}\\
-1 & \mbox{if there is a white neighbor.}
\end{array}\right.
\end{displaymath}
\item Along the left side of an oriented cycle, the label decreases by 1 after a white vertex or when encountering a white neighbor and increases by 1 when encountering a leg.
\end{itemize}

The above statements hold for general -- as well as bipartite -- mobiles. In the following, we will only consider bipartite mobiles, as they are much easier to decompose.

\subsection{Reconstruction of Bipartite Maps of Genus $g$}

In the following, it will be convenient to work with rooted schemes. One can then define a canonical labelling and orientation for each edge of a rooted scheme. An edge $e$ now has an origin $e_{-}$ and an endpoint $e_{+}$. The $k$ corners around a vertex of degree $k$ are clockwisely ordered and denoted by $c_1,\dots,c_k$.\\

Given a scheme $S$, let $V_{\circ}, V_{\bullet}, C_\circ, C_\bullet$ be respectively the sets of white and black vertices and of white and black corners. A \emph{labelled scheme} $(S,(l_c)_{c\in V_\circ \cup C_\bullet})$ is a pair consisting of a scheme $S$ and a labelling on white vertices and black corners, with $l_c\ge 0$ for all $c$. Labellings are considered up to translation, as they will not affect local variations. For $e\in{E_S}$, an edge of $S$, we associate a label to each extremity $l_{e_-}, l_{e_+}$. If an extremity is a white vertex of label $l$, its label is $l$. If the extremity is a black vertex, its label is the same as the next clockwise corner of the black vertex.

Let a \emph{doubly-rooted planar mobile} be a rooted (on a black or white vertex) planar mobile  with a secondary root (also black or white). These two roots are the extremities of a path $(v_1,\dots,v_k)$. The \emph{increment} of the doubly-rooted mobile is then defined as $n_{\rightarrow}-n_{\circ}-n_{\wn}$, which is not necessarily 0, as the path is not a cycle.

%On the converse, one can recover a proper $g$-mobile with a given $g$-scheme $S$. However, in order to simplify the counting, the steps will be slightly different by noticing that what lays between two scheme vertices in a $g$-mobile is in fact some kind of doubly-rooted planar mobile, whose type is only determined by the type of the edge. The \emph{type} $\tau_e \in \{wb,bb,bw\}^2$ of a scheme edge $e$ prescribes the colors of the vertices which will be adjacent to its ends in the $g$-mobile. Notice that the type must be consistent with the color of the edge ends. For instance, an edge with type $(wb,bb)$ can be substituted by a planar mobile rooted on a univalent white vertex attached to a black vertex ($wb$) and secondarily rooted on a univalent black vertex attached to another black vertex ($bb$). 

%Hence we will work instead with \emph{typed schemes} $(S,(\tau_e)_{e\in E(S)})$, where one associates each edge $e$ of $S$ to a consistent type $\tau_e$. The number of typed schemes of genus $g$ is again bounded by a function of $g$. Finally we can reconstruct a (rooted) $g$-mobile $M$ with a given typed scheme $(S,(\tau_e)_{e\in E(S)})$ by applying the following operations:

%\begin{enumerate}
%\item Insert a planar mobile in each corner of white vertices in $S$,
%\item Attach planar mobiles and legs around black vertices in $S$,
%\item Replace each edge $e$ of $S$ by a suitable doubly-rooted planar mobile (each root being connected to one end of the edge), according to its type $\tau_e$,
%\item Transfer the root.
%\end{enumerate}

Similarly as in~\cite{Cha09}, we present a non-deterministic algorithm to reconstruct a $g$-mobile:\\

\noindent
\textbf{Algorithm.}

	(1) \emph{Choose a labelled $g$-scheme $(S,(l_c)_{c\in V_\circ \cup C_\bullet})$.}

	(2) \emph{$\forall v\in V_\bullet$, choose a sequence of non-negative integers $(i_k)_{1\le k\le deg(v)}$, then attach $i_k$ planar mobiles and $i_k+l_{c_{k+1}}-l_{c_{k}}+1$ legs to $c_k$ (the $k^{th}$ corner of $v$).}

	(3) \emph{$\forall e\in S$, replace $e$ by a doubly-rooted mobile of increment $incr(e)=l_{e_+}-l_{e_-} + \left\lbrace\begin{array}{cl}
+1  & \mbox{if $e_-$ is white,} \\
-1 & \mbox{if $e_-$ is black.}
\end{array}\right.$}

	(4) \emph{On each white corner of $S$, insert a planar mobile.}

	(5) \emph{Distinguish and orient an edge as the root.}

\begin{figure}[h]
\centering
\includegraphics[width=0.32\linewidth]{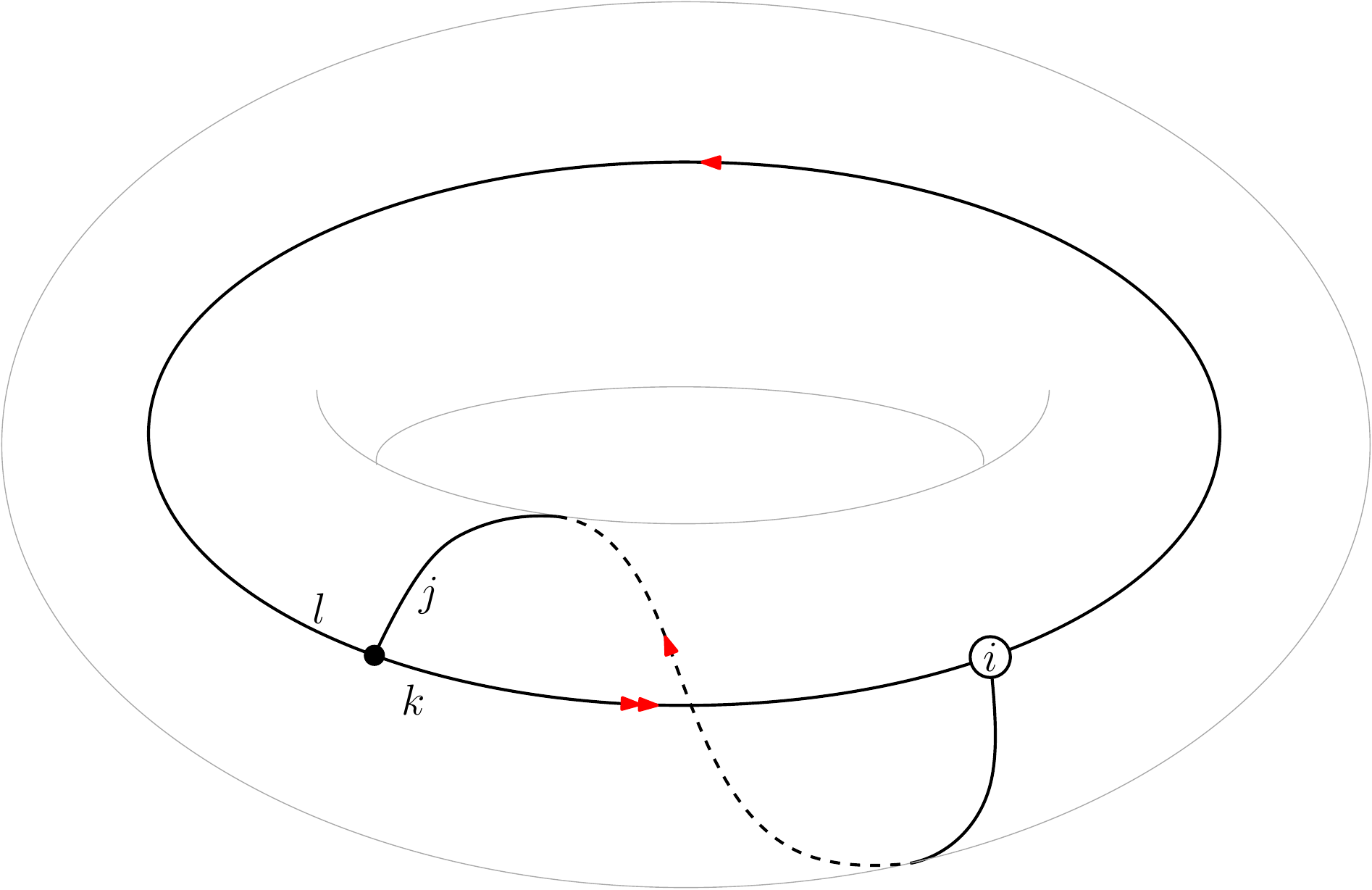}
\includegraphics[width=0.32\linewidth]{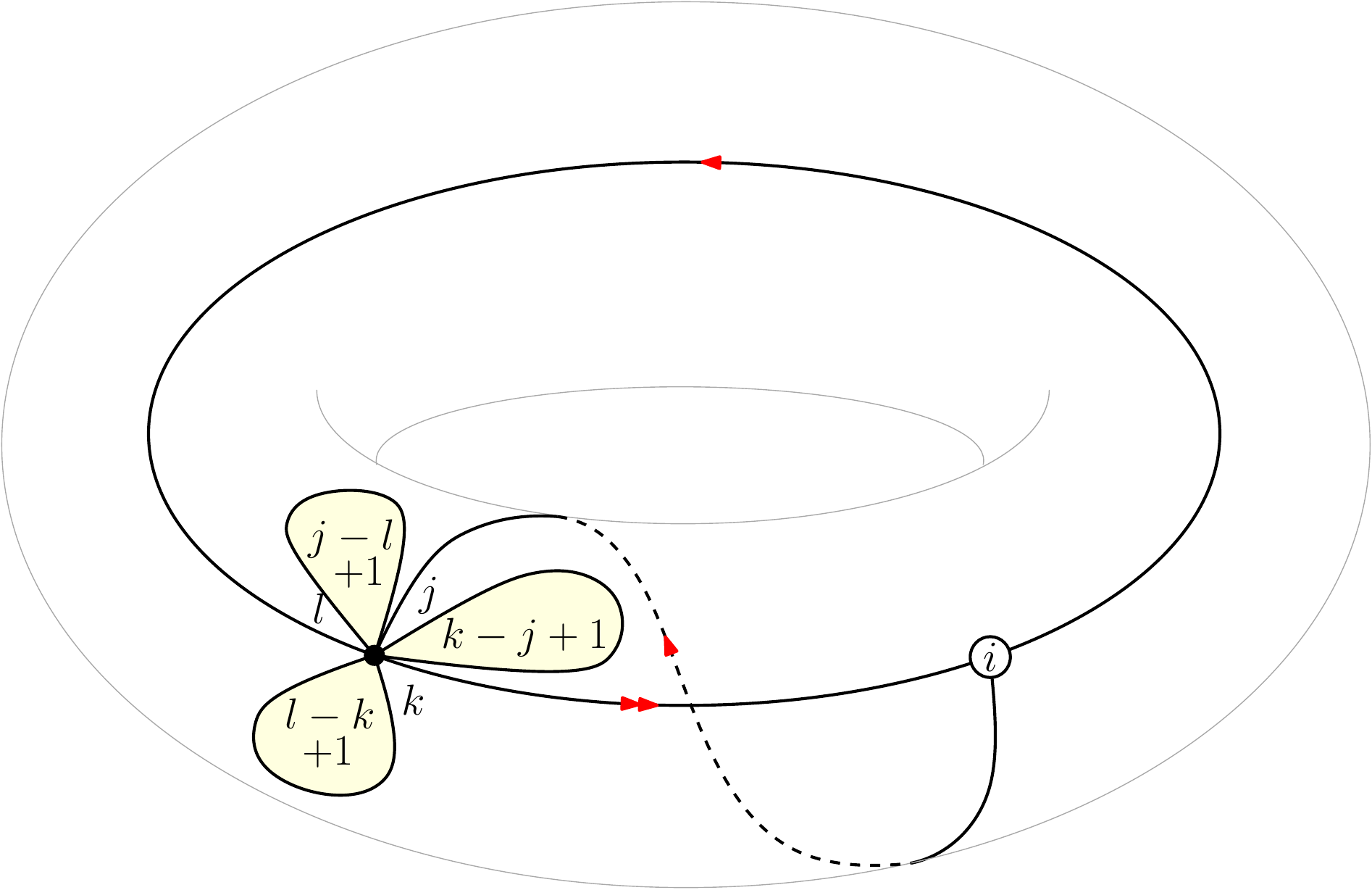}
\includegraphics[width=0.32\linewidth]{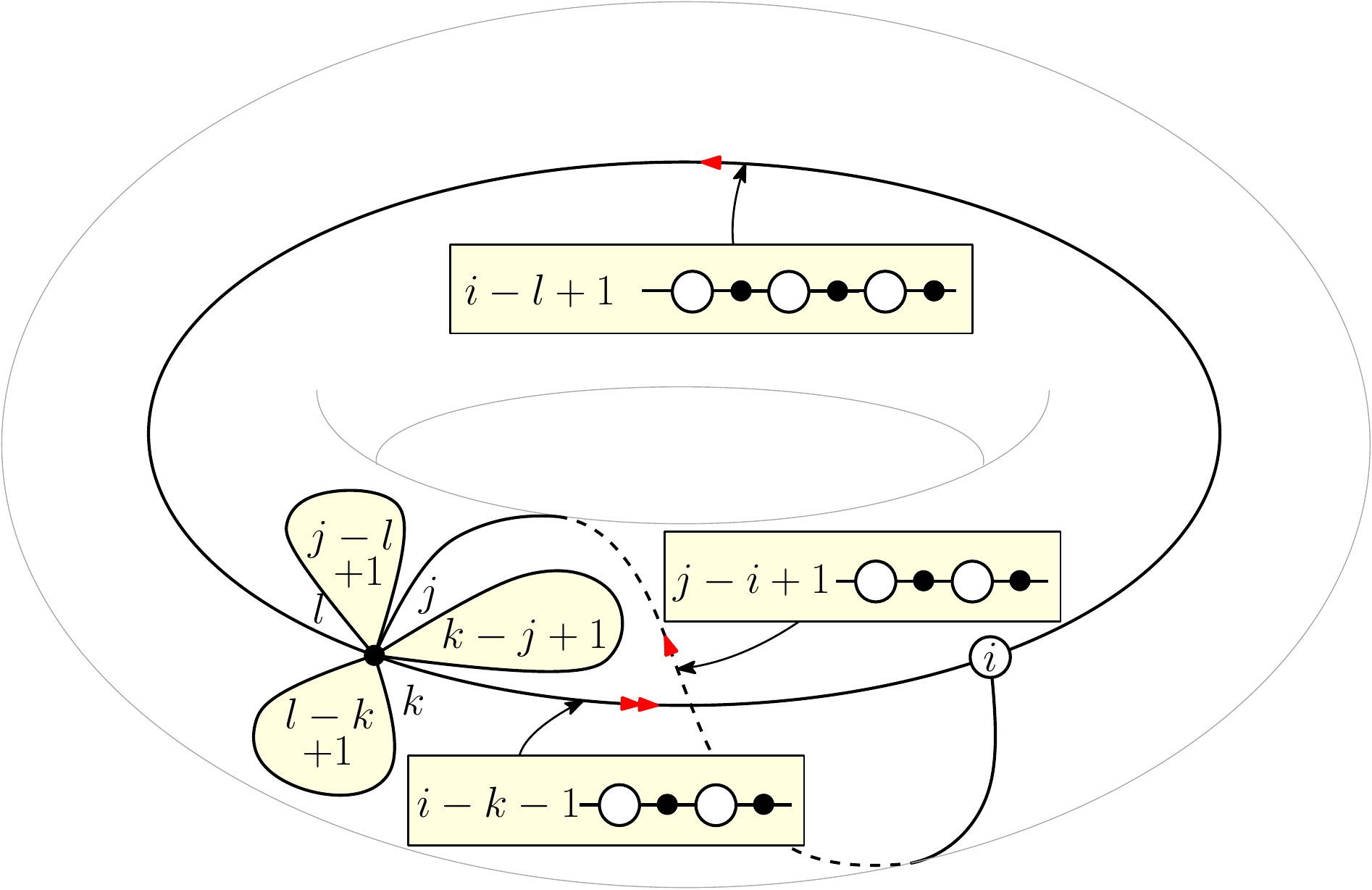}
\caption{Steps (1)--(3) of the algorithm.}
\label{fig:algo} 
\end{figure}

\begin{pro}
Given $g>0$, the algorithm generates each rooted bipartite $g$-mobile whose scheme has $k$ edges in exactly $2k$ ways.
\end{pro}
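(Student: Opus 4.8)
The plan is to prove the two halves of the statement separately: first \emph{soundness} (every run of the algorithm outputs a genuine rooted bipartite $g$-mobile), and then \emph{completeness with multiplicity} (every rooted bipartite $g$-mobile $m$ whose scheme has $k$ edges arises from exactly $2k$ runs). The guiding principle is that the deterministic scheme-extraction described above — delete legs, iteratively delete degree-$1$ vertices, contract maximal degree-$2$ paths — is the inverse of the algorithm, so the real content is (i) checking that the local insertions of steps (2)--(4) glue into an object obeying the \emph{global} constraint \eqref{prop:cycle}, and (ii) locating precisely the non-determinism that is not pinned down by $m$.

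For soundness I would first dispatch the purely local conditions: step (2) attaches $i_k + l_{c_{k+1}}-l_{c_k}+1$ legs to the $k$-th black corner, and since the differences $l_{c_{k+1}}-l_{c_k}$ telescope to $0$ around a black vertex, the total number of legs there equals its number of white neighbours, as the definition of a $g$-mobile demands. The crux is the global constraint \eqref{prop:cycle}. Here I would argue that, because the scheme carries an honest integer labelling on $V_\circ\cup C_\bullet$ and because step (3) forces the doubly-rooted mobile inserted into each scheme-edge $e$ to have increment $incr(e)=l_{e_+}-l_{e_-}\pm 1$, the label read along the left side of any oriented cycle of the reconstructed mobile returns to its starting value. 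Expressing the net variation along such a cycle as the signed count of legs, white vertices and white neighbours and setting it equal to $0$ recovers exactly $n_{\rightarrow}=n_{\circ}+n_{\wn}$. This is the single place where the genus-$g$ topology (the cycles of the scheme) genuinely enters, and it is the step I expect to be the main obstacle.

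For completeness and the count I would start from a rooted bipartite $g$-mobile $m$, forget its root to obtain $m'$, and apply scheme-extraction to get a well-defined \emph{unrooted} scheme $S$ with $k$ edges. I claim that once one fixes a rooting of $S$ — equivalently, an oriented scheme-edge, of which there are $2k$ — every remaining choice in steps (1)--(4) is forced by $m'$: the canonical labelling is determined up to the translation already quotiented out and normalised by $l_c\ge 0$; the planar mobiles inserted at black and white corners (steps (2) and (4)) are read off as the pieces deleted during extraction; and the doubly-rooted mobile assigned to each edge (step (3)) is precisely the corresponding contracted degree-$2$ path, whose increment automatically matches $incr(e)$ by the label-consistency of $m'$. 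Finally step (5) must reinstate the original root of $m$, a further forced choice. Hence each of the $2k$ rootings of $S$ extends to exactly one admissible run producing $m$.

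It then remains to see that these $2k$ runs are genuinely distinct, which is where rooting does its usual work. Even if the bare scheme $S$ has a non-trivial automorphism group, the planar mobiles inserted in steps (2)--(4) are dictated by $m'$ and are generically all different, so no scheme automorphism can preserve the full insertion data; consequently the $2k$ oriented-edge choices give $2k$ pairwise non-isomorphic decorated rooted schemes, hence $2k$ distinct runs. Combined with soundness, this yields that the algorithm produces each such mobile in exactly $2k$ ways. In the write-up I would isolate the cycle-consistency check (soundness) and the invertibility of scheme-extraction (completeness) as two lemmas, the former being the genuinely new genus-$g$ ingredient.
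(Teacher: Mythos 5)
Your overall strategy coincides with the paper's: soundness via the telescoping of label increments around black corners and along the paths substituted for scheme edges, and the count $2k$ via the observation that the only residual non-determinism, once the target mobile is fixed, is the choice of the oriented root edge of the scheme (the ``secondary root''). Your explicit reduction of the global cycle constraint \eqref{prop:cycle} to the statement that labels return to their starting value around every cycle of the scheme is a welcome elaboration of a point the paper only gestures at (``a simple counting shows\dots''). The one step you get wrong is the distinctness of the $2k$ runs: you argue that the inserted planar mobiles are ``generically all different,'' so no scheme automorphism preserves the insertion data. Genericity is not enough --- a bipartite $g$-mobile built symmetrically on, say, the one-vertex two-loop scheme of genus $1$ has several oriented scheme edges related by an automorphism carrying identical insertion data, and your argument would then undercount. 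The correct (and the paper's) justification is that the root edge of the mobile chosen in step (5) destroys \emph{all} symmetries of the final object, so the $2k$ choices of secondary root, each combined with the root position forced by $m$, yield $2k$ pairwise distinct runs regardless of how symmetric the unrooted mobile is. With that one substitution your proof is complete and essentially identical to the paper's.
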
 

\begin{proof}
One can easily see that the obtained object is indeed bipartite. Attaching planar mobiles and legs added at step (2) in a corner $c_k$ creates new corners, such that:
\begin{itemize}
\item The first carries the same label $l_{c_k}$ as $c_k$, and
\item the last carries the label $l_{c_k} + (i_k + l_{c_{k+1}} - l_{c_{k}} +1) - i_k = l_{c_{k+1}}+1$.
\end{itemize}
The next corner should then be labelled $(l_{c_{k+1}}+1)-1 = l_{c_{k+1}}$, due to the next white neighbor, which is precisely what we want.

In the same fashion, at step (3), a simple counting shows that each edge is replaced by a path such that the labels along it evolve according to the scheme labelling.

We thus obtain a well-formed rooted bipartite $g$-mobile, with a secondary root on its scheme. Since the first root destroys all symmetries, there are exactly $2k$ choices for the secondary root, which would give the same rooted $g$-mobile.
\end{proof}

\subsection{$g$-Mobile Counting}

A doubly-rooted bipartite planar mobile can be decomposed along a sequence of elementary cells forming the path between its two roots. Its increment is simply the sum of the increments of its cells.

\begin{definition}
An \emph{elementary cell} is a half-edge connected to a black vertex itself connected to a white vertex with a dangling half-edge. The white vertex has a sequence of black-rooted mobiles attached on each side. The black vertex has $j\ge0$ legs and $k\ge0$ white-rooted mobiles on its left, $l\ge 0$ white-rooted mobiles and $k+l-j+2$ legs on its right, and its degree is $2(k+l+2)$. The \emph{increment} of the cell is then $j-k-1$.  
\end{definition}

The generating series $P:=P(t,z,(x_{2i}),s)$ of a cell, where $s$ marks the increment, is:
\begin{displaymath}P(t,z,(x_{2i}),s)= \frac{z^2 R^2}{t}\sum_{j,k,l\ge 0} {j+k \choose j} {k+2l-j+2 \choose l} s^{j-k-1} x_{2(k+l+2)} R^{k+l} = \frac{z^2 R^2}{st} \widehat{P}.
\end{displaymath}

The generating series $S:=S(t,z,(x_{2i}),s)$ of a doubly-rooted mobile depends on the color of its roots $(u,v)$:
\begin{displaymath}
S_{(u,v)}(t,z,(x_{2i}),s)=\left\lbrace\begin{array}{cl}
\frac{1}{1-P} & \mbox{if $(u,v)=(\circ,\bullet)$ or $(\bullet,\circ)$,} \\
\frac{z\widehat{P}}{1-P} & \mbox{if $(u,v)=(\circ,\circ)$,} \\
\frac{zR^2}{st(1-P)} & \mbox{if $(u,v)=(\bullet,\bullet)$.}
\end{array}\right.
\end{displaymath}

We can now express the generating series $R_{S}:=R_S(t,z,(x_{2i}))$ of rooted bipartite $g$-mobiles with scheme $S$:
\begin{multline}\label{eq:gmob}
R_S(t,z,(x_{2i})) = 2\frac{z\partial}{\partial z} \frac{1}{2|E|} z^{|E|}t^{|V_\circ|}\left(\frac{R}{tz}\right)^{|C_\circ|} \bullet \\ \bullet \sum_{(l_c)\textrm{ labelling}}  \left[ \prod_{e\in E} [s^{incr(e)}] S_{(e_-,e_+)} \prod_{v\in V_\bullet} \sum_{i_1,\dots,i_{\textrm{deg}(v)}\ge 0} \left(\prod_{k=1}^{\textrm{deg}(v)} {2i_k + l_{c_{k+1}} - l_{c_k} +1 \choose i_k}\right) x_{2(\textrm{deg}(v) +\sum i_k)}   \right].
\end{multline}

\begin{pro}
The generating series $M_D^{(g)}:=M_D^{(g)}(t,z,(x_{2i}))$ for the family of rooted bipartite maps of genus $g$, where the vertex degrees belong to $D$, satisfies the relation:
\begin{equation} \frac{\partial M_D^{(g)}}{\partial t} = \frac{2}{z}\sum_{\substack{S \textrm{ scheme} \\ \textrm{of genus } g}} R_S(t,z,(x_{2i}\mathds{1}_{\{2i\in D\}})).
\end{equation}
\end{pro}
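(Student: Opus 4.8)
The plan is to mirror the logic of Proposition~\ref{Pro1} and Proposition~\ref{Pro2}: just as in the planar case, where $\partial M/\partial t$ counts rooted maps with an additional pointed (and discounted) vertex via the mobile bijection of Theorem~\ref{Th3}, here $\partial M_D^{(g)}/\partial t$ should enumerate rooted bipartite maps of genus $g$ in which one extra vertex is pointed. By the genus-$g$ bijection (Theorem~\ref{Th3}), such pointed maps correspond to rooted bipartite $g$-mobiles containing at least one black vertex, with the pointed vertex being the one vertex of the map without a counterpart in the mobile. So the first step is to reduce the identity to the claim that $\frac{2}{z}\sum_{S}R_S(t,z,(x_{2i}\mathds{1}_{\{2i\in D\}}))$ is exactly the generating series of rooted bipartite $g$-mobiles weighted by $t$ per white vertex, $z$ per edge, and $x_{2i}$ per black vertex of degree $2i$, with the degree restriction enforced by the indicator $\mathds{1}_{\{2i\in D\}}$.

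The second step is to assemble that generating series by summing over schemes. Since there are only finitely many $g$-schemes (by the Euler-characteristic count $\sum_{i\ge3}(i-2)d_i = 4g-2$ given above), the sum $\sum_S R_S$ is a finite sum, and each $R_S$ was already constructed in~\eqref{eq:gmob} to count the rooted bipartite $g$-mobiles whose scheme is $S$. I would invoke the reconstruction algorithm and its correctness Proposition (the one stating the algorithm generates each rooted bipartite $g$-mobile with scheme of $k$ edges in exactly $2k$ ways): the inner bracket of~\eqref{eq:gmob} sums over all admissible labellings and all insertions of planar mobiles, legs, and doubly-rooted mobiles into corners and edges, while the prefactor $\frac{1}{2|E|}$ precisely cancels the $2k$-fold overcounting with $k=|E|$. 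The operator $2\,\frac{z\partial}{\partial z}$ encodes the rooting by an edge together with the factor $2$ for choosing the orientation (the same $2$ that appears in~\eqref{eqPro12}). Substituting $x_{2i}\mathds{1}_{\{2i\in D\}}$ for $x_{2i}$ simply kills every mobile having a black vertex of degree $2i\notin D$, i.e.\ every map with a face valency outside $D$, which is the desired degree restriction (compare the Remark after Proposition~\ref{Pro1}).

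The third step is to account for the pointing and the factor $1/z$. In the bijection, the pointed vertex of the map is the vertex lost when passing to the mobile, so differentiating $M_D^{(g)}$ with respect to $t$ (which marks vertices) produces maps with one distinguished non-root vertex, matching mobiles with at least one black vertex exactly as in the proof of Proposition~\ref{Pro1}. The factor $\frac{1}{z}$ arises because $R_S$ is built from the series $R$ rooted at an \emph{edge-endowed} white vertex (the $R/z$ normalization visible in~\eqref{eqPro12} and in the $(R/(tz))^{|C_\circ|}$ factor of~\eqref{eq:gmob}); dividing by $z$ converts back to the per-vertex weighting consistent with $\partial M_D^{(g)}/\partial t$. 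I would make this precise by checking that the white-corner and edge contributions carry the correct powers of $z$ and $t$ so that the total edge count of the reconstructed mobile agrees with $z$ marking edges, as dictated by Theorem~\ref{Th3}.

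The main obstacle I anticipate is the bookkeeping in the third step — verifying that all the normalizing powers of $t$ and $z$ in~\eqref{eq:gmob} (namely $z^{|E|}t^{|V_\circ|}(R/(tz))^{|C_\circ|}$ together with the prefactor and the elementary-cell series $P$, $S_{(u,v)}$) combine to give exactly the per-vertex, per-edge, per-face-valency weighting with the correct overall factor $\frac{2}{z}$. The labelling sum and the $[s^{incr(e)}]$ coefficient extractions must be shown to realize precisely the consistency constraint~\eqref{prop:cycle} along every cycle, so that only genuine $g$-mobiles (not arbitrary leg-decorated schemes) are counted; this is where the increment formalism and the doubly-rooted cell decomposition do the real work, and where an off-by-one in a label variation or a miscount of the $2k$ symmetry factor would break the identity. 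By contrast, the reduction to mobiles via Theorem~\ref{Th3} and the finiteness of the scheme sum are essentially immediate given the earlier results, so the difficulty is concentrated entirely in matching the weights.
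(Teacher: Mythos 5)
Your proposal follows exactly the same route as the paper, whose entire proof is the single sentence that the result ``follows directly from Theorem~\ref{Th3} and Equation~\eqref{eq:gmob}'': you invoke the genus-$g$ bijection to identify $\partial M_D^{(g)}/\partial t$ with pointed maps and hence rooted $g$-mobiles, and the scheme decomposition with its $2|E|$-fold overcounting correction to assemble the mobile series. Your additional bookkeeping of the $t$, $z$ powers and the indicator substitution is a faithful (and more explicit) elaboration of what the paper leaves implicit.
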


\begin{proof}
This follows directly from Theorem~\ref{Th3} and Equation~\eqref{eq:gmob}.
\end{proof}

\section{Conclusion}

Theorem~\ref{Th1} confirms the existence of a universal behaviour of planar maps. The asymptotics (with exponent $-5/2$) and this central limit theorem for the expected number of vertices of a given degree are believed to hold for any ``reasonable'' family of maps. It has also been shown in~\cite{CMS07,Cha09} that a similar phenomenom occurs for maps of higher genus: The generating series of several families (quadrangulations, general and Eulerian maps) of genus $g$ exhibit the same asymptotic exponent $5g/2-5/2$.

The expression obtained in Section~\ref{sec5} needs to be properly studied in order to obtain an asymptotic expansion. It refines previous results by controlling the degree of each vertex in the corresponding map.

\bibliographystyle{amsplain}
\bibliography{Bibliography}

\end{document}